\titleformat{\section}[block]{\Large\bfseries\filcenter}{\thesection}{1em}{}
\def\expandafter\normalsize\expandafter{%
\normalsize
\setlength\abovedisplayskip{6pt}
\setlength\belowdisplayskip{6pt}
\setlength\abovedisplayshortskip{6pt}
\setlength\belowdisplayshortskip{6pt}
}
\theoremstyle{plain}
\renewcommand*\thesection{\arabic{section}}
\numberwithin{equation}{section} 
\newtheorem{theorem}{Theorem}[section]
\newtheorem{lemma}[theorem]{Lemma}
\newtheorem*{lemma*}{Lemma}
\newtheorem{proposition}[theorem]{Proposition}
\newtheorem{corollary}[theorem]{Corollary}
\theoremstyle{definition}
\newtheorem{definition}[theorem]{Definition}
\newtheorem{remark}[theorem]{Remark}
\newtheorem{question}[theorem]{Question}
\let\expandafter\oldproof\csname\string\proof\endcsname
\let\oldendproof\endproof
\renewenvironment{proof}[1][\proofname]{%
\oldproof[\upshape \bfseries #1]%
}{\oldendproof}
\def\@makechapterhead#1{%
\vspace*{50\p@}%
{\parindent \z@ \raggedright \normalfont
\interlinepenalty\@M
\Huge\bfseries  \thechapter.\quad #1\par\nobreak
\vskip 40\p@
}}
\newcommand{\eps}{\varepsilon}
\DeclareMathOperator{\Sym}{Sym}
\DeclareMathOperator{\ind}{ind}
\def \a{\alpha}
\def \R {\mathbb{R}}
\def \Z {\mathbb{Z}}
\def \N{\mathbb{N}}
\def \D{\textup{D}}
\def \e{\varepsilon}
\def \d{\,\textup{d}}
\def \p{\partial}
\def \mb{\mathbb}
\def \tp{\textup}
\def \id{\textup{id}}
\def \loc{\textup{loc}}
\begin{document}

	\title{\textbf{Constancy of the index for gradient mappings}}
	
	
	\author[1]{{\Large Andr\'e Guerra}}
	\author[2]{{\Large Riccardo Tione}}
		
	\affil[1]{\small Department of Pure Mathematics and Mathematical Statistics,  University of Cambridge,\protect\\  Wilberforce Rd, Cambridge CB3 0WB, UK
	\protect\\
	{\tt{adblg2@cam.ac.uk}} \vspace{1em} \ }
		
	\affil[2]{\small Dipartimento di Matematica, Universit\`a degli Studi di Torino,
Via Carlo Alberto 10, 10123 Torino, Italy 
	\protect\\
	{\tt{riccardo.tione@unito.it}}  }
	
	\date{}
	
	\maketitle

	\begin{center}
	\vspace{-0.4cm}
	\textit{ To Jan Kristensen, with friendship and admiration. \vspace{1em} }
	\end{center}	
	
	\unmarkedfntext{
	\hspace{-0.75cm}
	\emph{Acknowledgments.} AG acknowledges the support of Dr. Max R\"ossler, the Walter Haefner Foundation and the ETH Z\"urich Foundation, as well as the support of the Royal Society through a Newton International Fellowship. He also acknowledges the hospitality of Instituto Superior T\'ecnico and the Universit\`a di Torino, where part of this research was conducted.
	}
	
	
	\begin{abstract}
We show that if the Hessian of a $C^{1,1}$ function has uniformly positive determinant almost everywhere then its index is locally constant, as conjectured by \v Sver\'ak in 1992.\ We deduce this result as a consequence of a more general theorem valid for quasiregular gradient mappings.
	\end{abstract}

	\section{Introduction}	

Geometric Function Theory studies how the analytic properties of maps influence their geometric and topological behavior. 
A prototypical example of this interplay is the Inverse Function Theorem: if a $C^1$ map $f\colon \R^n\to \R^n$ fulfills $\det \D f(x_0) \neq 0$, then $f$ is a local homeomorphism at $x_0$. In applications one is often interested in maps with lower regularity, for instance Lipschitz maps. Although the Inverse Function Theorem fails for Lipschitz maps, it is restored if we additionally assume that the map satisfies
\begin{equation}
\label{eq:non-deg}
\det \D f(x)>0 \quad \tp{ for a.e.\ $x$} \tp{ in } \R^n;
\end{equation}
recall that, by Rademacher's Theorem, Lipschitz maps are a.e.\ differentiable.
In this case $f$ has an a.e.\ inverse which moreover is a Sobolev map \cite{Fonseca1995}. 



Now suppose $f$ is a \textit{gradient map}, i.e.\ $f=\D u$ for some scalar $u\colon \R^n\to \R$. If $f\in C^1(\R^n,\R^n)$ and $\det \D f(x_0) \neq 0 $, then the symmetric matrix $\D f=\D^2 u$ has a constant number of negative eigenvalues around $x_0$: this is the \textit{index} of $\D^2 u$, which we denote by $\ind(\D^2 u)$. Indeed, for any $k\in \N$, the set of symmetric matrices with index $k$ is a connected component of the space of non-singular symmetric matrices; around $x_0$, by continuity $\D f$ takes values in only one such component and so its index is constant. This simple argument fails when $f$ is merely Lipschitz, as $\D f$ becomes discontinuous. Here we show that, nevertheless, the same conclusion holds:

	\begin{theorem}\label{thm:MA}
	Let $\Omega\subset \R^n$ be open and connected. If $u\in W^{2,\infty}_\loc(\Omega)$ is such that
	\begin{equation}
	\label{eq:MA}
	\det \D^2 u(x) \geq \delta>0 \quad \text{for a.e.\ } x\in \Omega,
	\end{equation}
	then $\ind(\D^2 u)$ is constant a.e.\ in $\Omega$. 	The same conclusion holds if, instead of \eqref{eq:MA}, we have $\det \D^2 u \leq - \delta<0$ a.e.\ in $\Omega$.
	\end{theorem}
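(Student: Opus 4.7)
The plan is to recast the problem within the theory of quasiregular mappings, as advertised in the abstract, and then exploit the additional symmetry coming from the gradient structure. First, set $f := \D u \in W^{1,\infty}_{\loc}(\Omega;\R^n)$. Since $\D f = \D^2 u$ is bounded and symmetric and $\det \D f \geq \delta > 0$ a.e., one has the ellipticity estimate $|\D f|^n \leq (L^n/\delta)\det \D f$ a.e., with $L$ a local Lipschitz bound for $\D u$. Hence $f$ is a $K$-quasiregular mapping with $K = L^n/\delta$, non-constant since $\det \D f > 0$. By Reshetnyak's theorem $f$ is continuous, open, and discrete, and the problem reduces to showing that the \emph{index} of the symmetric matrix $\D f$ is a.e.\ constant --- a statement about quasiregular gradient mappings.

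Next, I would identify the target invariant. Since $\det \D^2 u > 0$ a.e., $\ind(\D^2 u) \in \{0, 2, 4, \dots\}$, and these even integers label the connected components of the space $\Sym^+(\R^n)$ of invertible symmetric matrices with positive determinant. In the $C^1$ case, continuity of $\D^2 u$ trivially forces its image to lie in one such component; the task is to recover the same conclusion using only almost-everywhere pointwise information, plus the quasiregular structure. By Calder\'on--Zygmund theory, a.e.\ $x_0 \in \Omega$ admits an $L^2$-mean quadratic Taylor expansion of $u$ with Hessian $\D^2 u(x_0)$, so the index is well defined on a set of full measure.

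The heart of the argument must exploit the gradient structure beyond mere quasiregularity, because the local topological degree of $f$ equals $+1$ at every regular point (since the Jacobian is positive) and is therefore insensitive to the particular even value of the index. I would attempt a Legendre-transform approach: outside the branch set of $f$, which is closed with Hausdorff dimension at most $n-2$, $f$ is a local homeomorphism, so $f$ can be locally inverted and a dual function $u^\ast$ defined, with $\D^2 u^\ast = (\D^2 u)^{-1}$ at corresponding points. Then $\D u^\ast$ is again a quasiregular gradient mapping, and $\ind(\D^2 u^\ast) = \ind(\D^2 u)$; pairing $u$ and $u^\ast$ via duality should enable a connectedness or homotopy argument propagating the index. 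In dimension $n=2$ an alternative route is via the Stoilow factorization $f = F \circ \phi$ combined with the constraint $\partial f = \tfrac12 \Delta u \in \R$: this forces $\partial \phi / \overline{F'(\phi)}$ to be real and bounded away from zero a.e., so the problem reduces to showing that this real-valued function has locally constant sign.

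The principal obstacle is exactly this passage from an algebraic invariant (the eigenvalue structure of $\D^2 u$) to a topological one that can be shown locally constant within quasiregular theory. Standard degree-theoretic tools do not distinguish between different even values of the index, and making Legendre duality rigorous at only $W^{2,\infty}$ regularity is delicate, since pointwise Legendre transforms and classical Morse theory are unavailable. I expect the paper's argument to combine fine quasiregular theory --- branch-set dimension, continuity, and openness of $f$ --- with a careful topological or PDE-level exploitation of the symmetry of $\D f$ and the gradient structure of $u$.
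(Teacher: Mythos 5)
Your reduction is sound as far as it goes: setting $f=\D u$, noting $|\D f|^n\le (L^n/\delta)\det\D f$ a.e., and invoking the theory of quasiregular maps to get continuity, openness, discreteness, and a branch set $B_f$ of topological dimension at most $n-2$ (so that $f$ is locally injective on the open, connected, full-measure set $\Omega\setminus B_f$) is exactly the paper's first step. You also correctly diagnose why the local degree is useless here: it equals $+1$ at regular points and cannot distinguish the possible values of $\ind(\D^2u)$. But at precisely that point the proposal stops being a proof. The Legendre-transform idea does not produce the missing invariant: off the branch set the local inverse is again a gradient map with Hessian $(\D^2u)^{-1}$, which has the \emph{same} index, so duality yields no new quantity, and you give no mechanism by which ``pairing $u$ and $u^\ast$'' would propagate the index from one point of $\Omega\setminus B_f$ to another. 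The Stoilow remark only concerns $n=2$, where the result was already known. So the core of the theorem --- passing from the algebraic datum $\ind(\D^2u(x_0))$, defined only at a.e.\ point, to a quantity that is provably locally constant --- is left open; you even state explicitly that Morse theory is ``unavailable,'' which is where the proposal and the paper part ways.

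The paper's actual mechanism is a low-regularity local Morse theory built exactly to fill this gap. For a $C^1$ function with an isolated critical point one defines the critical groups $C_k(u_{x_0},x_0)=H_k(\{u_{x_0}\le 0\}\cap U,\ \{u_{x_0}\le 0\}\cap U\setminus\{x_0\})$, where $u_{x_0}$ is $u$ minus its tangent plane at $x_0$; these only require continuity of $u$ and are handled via a pseudo-gradient flow (Lemmas \ref{lemma:pseudo}--\ref{lemma:def}). Two facts then do all the work: (i) the critical groups are stable under small $C^1$ perturbations (Corollary \ref{cor:continuity}), hence $x\mapsto C_k(u_x,x)$ is locally constant, and therefore constant on the connected set $\Omega\setminus B_f$ where $\D u$ is locally injective (Theorem \ref{thm:constcritgroup}); and (ii) at any point of twice differentiability with non-singular Hessian $A$ one has $C_k=\delta_{k,\ind(A)}\Z$ (Proposition \ref{prop:ex}), because the rescalings $u(rx)/r^2$ converge in $C^1$ to the quadratic form of $A$. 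Since a.e.\ point of $\Omega\setminus B_f$ is such a point, the index is a.e.\ constant. In short: your quasiregular setup matches the paper, but the decisive ingredient is a homological (critical-group) invariant replacing the degree, and nothing in your proposal supplies it or an equivalent.
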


	Theorem \ref{thm:MA} proves in particular a conjecture made by \v{S}ver\'ak in \cite[p.\ 297]{Sverak1992}, who had previously proved this result when $n\leq 3$ \cite{Sverak1991b,Sverak1992}. We will momentarily discuss  this dimensional restriction, as well as related literature, but first let us note that assumption \eqref{eq:MA} is optimal:

	
	\begin{remark}[Optimality]
Even when $n=2$ assumption \eqref{eq:MA} cannot be relaxed to \eqref{eq:non-deg} with $f=\D u$, as observed in  \cite{Lewicka2017a}.
Indeed,  $u(x_1,x_2)=x_1^3/|x_1| e^{x_2^2/2}$ satisfies \eqref{eq:non-deg} over $\mb B^2$, is in $W^{2,\infty}(\mb B^2)$, but $\ind( \D^2 u) = - 2 \chi_{\{x_1<0\}}$ a.e.\ in $\mb B^2$. 

	\end{remark}
	

	As stated, Theorem \ref{thm:MA} is purely analytic in nature, but in fact it is deeply connected to a topological property of gradient maps. To explain this connection we need to recall some basic ideas from Morse theory \cite{Milnor1963}. A central result in this theory is that the index of a non-degenerate critical point $x_0$ of a smooth function $u$ can be inferred from the change in topology of its sublevel sets as one crosses the corresponding critical value. For instance, suppose $c$ is a critical value of $u$ associated with a single critical point $x_0$ of index $k$, and assume that $u$ is defined on the $n$-torus (rather than on $\R^n$). Then, for all sufficiently small $\e>0$, the sublevel set $\{ u\leq c+\e\}$ is diffeomorphic to $\{u\leq c-\e\}$ with a $k$-handle attached  (see e.g.\ \cite[p.\ 17]{Milnor1963}):
	\begin{equation}
\label{eq:diffeo}
\{ u \leq c + \e \} \cong \{u\leq c- \e \}\cup \tp{($k$-handle)}.
\end{equation}
This assertion in general fails when $u\not \in C^2$. It is therefore natural to seek a robust topological invariant that can capture the change in topology of sublevel sets even when $u$ is not twice differentiable.
In the course of our proof of Theorem \ref{thm:MA} we show 
that certain homology groups, defined via sublevel sets of $u$, are constant, and this topological perspective is a key ingredient in our approach.

	Although Theorem \ref{thm:MA} is trivial for smooth functions, we will deduce it as a consequence of a result which is non-trivial even in the smooth category. For $K>0$, consider the differential inclusion
	\begin{equation}
\label{eq:DI}
\D f(x) \in Q_K \quad \text{for a.e.\ } x\in \Omega, \qquad Q_K \equiv \{A\in \Sym(n): |A|^n \leq K \det A\},
\end{equation}
where $\Sym(n)$ denotes the space of $n\times n$ symmetric matrices. Notice that if $u\in C^{1,1}(\Omega)$ solves \eqref{eq:MA} then $\D^2 u \in Q_K$ a.e.\ in $\Omega$, where $K =\e^{-1} \tp{Lip}(\D u)^n.$ Hence Theorem \ref{thm:MA} is a consequence of the following:

	\begin{theorem}\label{thm:main}
	If $f\in W^{1,n}_\loc(\Omega,\R^n)$ solves \eqref{eq:DI} then $\ind(\D f)$ is constant a.e.\ in $\Omega$.
	\end{theorem}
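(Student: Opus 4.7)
The plan is to combine Reshetnyak's theory of quasiregular (QR) mappings with a topological analysis of sublevel sets of the associated potential, following the Morse-theoretic viewpoint alluded to in the introduction. First I would note that $\D f \in Q_K$ is exactly the condition that $f$ be a $K$-quasiregular mapping with symmetric differential. Assuming $f$ is non-constant (the other case being vacuous), Reshetnyak's theorem gives that $f$ is open, discrete, sense-preserving and $\det \D f > 0$ a.e. The a.e.\ symmetry of $\D f$ implies $f$ is curl-free, so on each simply connected subdomain $f = \nabla u$ for some $u \in W^{2,n}_\loc$; moreover, since QR maps are H\"older continuous, $u \in C^1$. Because $\det \D^2 u > 0$ a.e., $\ind(\D^2 u)$ can only take the even values $\{0, 2, \ldots, 2\lfloor n/2 \rfloor\}$ a.e., and it suffices to show that the measurable sets $E_k \equiv \{\ind(\D^2 u) = 2k\}$ cannot simultaneously have positive measure in any small ball.

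To distinguish the $E_k$ topologically, I would attach to each $y_0 \in \Omega$ a homological invariant $\sigma(y_0)$ built from the sublevel sets of the renormalized potential $w_{y_0}(x) \equiv u(x) - u(y_0) - f(y_0)\cdot(x - y_0)$. By construction $w_{y_0}(y_0) = 0$ and $\nabla w_{y_0}(y_0) = 0$, while $\D^2 w_{y_0} = \D^2 u$; at every Lebesgue point $y_0$ of $\D f$, the rescalings $r^{-2} w_{y_0}(y_0 + r\,\cdot\,)$ converge as $r \to 0^+$ to the quadratic form $Q_{y_0}(z) = \tfrac{1}{2}\D^2 u(y_0) z\cdot z$. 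The zero locus $\{Q_{y_0} = 0\} \cap \mathbb{S}^{n-1}$ is diffeomorphic to $\mathbb{S}^{2k-1} \times \mathbb{S}^{n-2k-1}$ when $\ind(\D^2 u(y_0)) = 2k$, so its homology encodes the index. My $\sigma(y_0)$ would be the homology of $\{w_{y_0} \leq 0\} \cap \p \mb B_\rho(y_0)$ for small $\rho$, which at Lebesgue points of $E_k$ should match that of the quadratic model and thus take $\lfloor n/2 \rfloor + 1$ pairwise distinct values as $k$ varies.

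The hard part will be to (i) define $\sigma$ robustly for a merely $W^{2,n}$ function and (ii) show that $\sigma$ is locally constant on the full-measure set of Lebesgue points; together with the connectedness of the components of $\Omega$, these will then force $\ind(\D^2 u)$ to be locally constant a.e. Step (ii) is where the quasiregular structure becomes essential: $K$-QR mappings form a closed class under locally uniform convergence (Reshetnyak's compactness), and the rescalings $r^{-1}(f(y_0 + r\,\cdot\,) - f(y_0))$ remain $K$-QR, so the sublevel sets of $w_{y_0}$ at small scales should inherit the topological stability of those of the limiting quadratic forms. Technical subtleties likely include the need to control $f$ near its branch set (of Hausdorff dimension $\leq n-2$), and to upgrade the natural $W^{1,n}$ convergence of blow-ups to a form of convergence of sublevel sets --- perhaps in the Hausdorff sense --- sufficient to ensure topological stability of $\sigma$. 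Carrying out this blow-up analysis with all topological bookkeeping intact looks to be the central analytical obstacle of the proof.
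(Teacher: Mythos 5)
Your guiding philosophy matches the paper's: attach a homological invariant to the sublevel sets of the re-centered potential $w_{y_0}$, show it is locally constant, and compute it at points of twice differentiability to read off the index. But the technical path you sketch diverges in two places, and it is there that I see genuine gaps.

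First, the structural fact the paper actually exploits is not Reshetnyak compactness of rescalings but that on $\Omega'\equiv\Omega\setminus B_f$ the gradient $\D u=f$ is \emph{locally injective}, so $w_{y_0}$ has an \emph{isolated} critical point at $y_0$ for every $y_0\in\Omega'$. This is indispensable: the paper's invariants are the critical groups $C_k(w_{y_0},y_0)=H_k(\{w_{y_0}\le 0\}\cap U,\,\{w_{y_0}\le 0\}\cap U\setminus\{y_0\})$, and the pseudo-gradient deformation machinery of Section~\ref{sec:morse} that makes them computable and stable presupposes that $y_0$ is the unique critical point in $U$. Your proposed invariant $\sigma(y_0)=H_*\bigl(\{w_{y_0}\le 0\}\cap\p B_\rho\bigr)$ faces the same issue, and in addition its independence of $\rho$ needs proof. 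You also need more than $|B_f|=0$: the paper uses that $\Omega'$ is \emph{open and connected} (a consequence of the topological dimension bound $\dim B_f\le n-2$). Local constancy of $\sigma$ merely on the Lebesgue set of $\D f$, combined with connectedness of $\Omega$, does not yield a.e.\ constancy; a full-measure subset has no usable connectedness of its own, and one must instead prove constancy on the open connected set $\Omega'$ and then read off the index on the further full-measure set of twice-differentiability points.

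Second, your mechanism for local constancy --- blow-ups plus Reshetnyak compactness, with an upgrade from $W^{1,n}$ convergence of rescalings to topological (e.g.\ Hausdorff) stability of sublevel sets --- is considerably harder than what the paper does, and as you acknowledge it is the ``central analytical obstacle.'' The paper avoids it entirely: for $y$ near $x_0$, the translated functions $\widetilde u_y(x)\equiv u_y(y+x)$ converge to $\widetilde u_{x_0}$ in $C^1(\mb B^n)$ simply by the modulus of continuity of $\D u$ (no blow-up), and critical groups at isolated critical points are stable under small $C^1$-perturbations (Corollary~\ref{cor:continuity}, proved via a cut-off gluing of the potentials together with excision). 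Blow-ups appear only at the very end, in the computation of $C_k$ at a point of twice differentiability (Proposition~\ref{prop:ex}\ref{it:morse}), where $C^1$-convergence of $r^{-2}u(r\,\cdot\,)$ to the quadratic model is automatic from the definition of second-order differentiability. As written, I do not see how to close the gap between $W^{1,n}$ convergence of blow-ups of $f$ and topological stability of $\sigma$; you should instead aim for a perturbation argument at fixed scale, which is where local injectivity of $\D u$ on $\Omega'$ becomes the decisive input.
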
		

Solutions to \eqref{eq:DI} are sometimes referred to as \textit{quasiregular gradient maps} \cite{Kovalev2005}, since this differential inclusion asserts that locally $f$ is a gradient map which is also quasiregular in the sense of Geometric Function Theory \cite{Rickman1993}. Besides their intrinsic interest, quasiregular gradient maps play an important role in the study of elliptic PDEs in the plane, cf.\ \cite[\S 16]{Astala2009} or \cite{Baernstein2005}. We emphasize that, even if $f$ is assumed to be smooth, the conclusion of Theorem \ref{thm:main} is far from obvious, and in fact its proof relies crucially on the properties of the branch set of quasiregular maps (or more generally maps of finite distortion). In this direction we refer the reader to \cite{Kovalev2011a,Kovalev2010} for very interesting results concerning lack of branching for solutions to differential inclusions.
	
\subsection{Previous results and related literature} 	
Both Theorems \ref{thm:MA} and \ref{thm:main} have been known for several decades when $n\leq 3$. The first proof of the latter result appeared in an influential manuscript \cite{Sverak1991b} due to \v Sver\'ak, which was unfortunately never published\footnote{The interested reader can find essentially \v Sver\'ak's proof in \cite[\S 7]{Lewicka2017a}.}. In any dimension, \v Sver\'ak showed that if $u\in W^{2,\infty}_\loc(\Omega)$ is a solution of \eqref{eq:MA} then 
\begin{equation}
\label{eq:Sv}
\lvert\{\ind(\D^2 u)=0 \}\rvert>0 \quad \implies \quad \ind(\D^2 u)=0 \tp{ a.e.\ in } \Omega,
\end{equation}
and $u$ is then locally \textit{convex}.
By replacing $u$ with $-u$, the same conclusion holds also with $0$ replaced by $n$. Since, for $n\leq 3$, there are only two possible values of the index (once one restricts to matrices whose determinant has a sign), Theorem \ref{thm:MA} follows in those dimensions from \eqref{eq:Sv}.

In a subsequent paper \cite{Sverak1992}, \v Sver\'ak introduced the integrands 
$F_k(A)\equiv  \lvert \det(A) \rvert \chi_{\ind^{-1}(\{k\})}(A),$ which he proved to be quasiconvex in $\Sym(n)$. This quasiconvexity  quickly implies Theorem \ref{thm:MA} under the strong \textit{additional assumption} that $u$ is affine in a neighborhood of $\partial \Omega$. \v Sver\'ak's integrands play an important role in the vectorial Calculus of Variations, in part due to their extremal properties \cite{Guerra2018}, and have been used  to construct counterexamples to regularity  \cite{Muller2003}.  

More recently, Faraco, Kirchheim and Sz\'ekelyhidi \cite{Faraco2008,Kirchheim2008,Szekelyhidi2005} pioneered a deep and very general separation theory for differential inclusions in $\R^{2\times 2}$; see also \cite{DePhilippis2023,Faraco2012,Lamy2019,Lamy2024,Sverak1993,Zhang1997} for further results in this setting. Their theory gives a fundamentally new proof of both Theorems \ref{thm:MA} and \ref{thm:main} when $n = 2$, but already when $n=3$ it is unclear how to employ their methods to the same effect. For the case $n = 2$, see also \cite[Theorem 1.4]{Iwaniec2008} for yet another proof of Theorem \ref{thm:main} under additional conditions. Still for $n=2$, we also refer the reader to \cite{Cao2023,Conti2012,Inauen2025} for  further flexibility and rigidity results for very weak solutions of the Monge--Amp\`ere equation.
\ 

Despite the remarkable progress described above, both Theorem \ref{thm:MA} and \ref{thm:main} remained open in general dimension. Theorem \ref{thm:MA} is arguably the first complete separation result for differential inclusions in higher dimensions.

\subsection{Elements of the proof}
\label{sec:proof}

As explained above, in low dimensions it suffices to consider Hessians which have index 0 (or $n$) in a set of positive measure. At a point $x_0$ of twice differentiability we have $\ind(\D^2 u(x_0))=0$ (resp.\ $\ind(\D^2 u (x_0))=n$) if and only if $x_0$ is a minimum (resp.\ maximum) of $u$. By touching the graph of $u$ from below with hyperplanes, it is not too difficult to show that $\ind(\D^2 u)=0$ in a neighborhood of a Lebesgue point where the same condition holds, provided $\D u$ is injective near $x_0$, see \cite[Lemma 3]{Sverak1991b} or \cite[Lemma 6.12]{Lewicka2017a}. To infer from this the full result, in  \cite{Sverak1991b} \v Sver\'ak  relies  in a crucial way on a beautiful characterization of strictly convex functions due to Ball \cite{Ball1980a}.

The main novelty in this paper is that we deal with \textit{saddle points}: these are much more challenging to handle than minima or maxima, since it is not even clear how to define the index of a saddle point $x_0$ if $\D^2 u(x_0)$ does not exist, while minima and maxima are meaningful even for continuous functions. Nonetheless, we also offer a simpler approach to study minima and maxima, and in our way to Theorem \ref{thm:main} we give a new proof of Ball's theorem.
 
In order to explain our main idea let us assume, after subtracting an affine function, that $u(x_0)=0$ and $\D u(x_0)=0$, for a function $u\in W^{2,\infty}_\loc(\Omega)$ satisfying \eqref{eq:MA}. As explained below \eqref{eq:diffeo}, a key challenge in our problem is to find a topological invariant which, on the one hand, encodes the change of topology between the sublevel sets $\{u\leq \delta\}$ and $\{u\leq -\delta\}$, for $\delta>0$ small, and on the other hand is still well-defined (and well-behaved!) when $u$ is not smooth.  Note that in the case where $x_0$ is a minimum this ``invariant'' is clear, since the latter set is empty. It turns out that the classical notion of \textit{critical groups}, which in the smooth setting goes back at least to Bott's work \cite{Bott1954}, provides a suitable invariant even in low regularity. The critical groups $C_k(u,x_0)$ of $u$ at $x_0$ are (essentially) the relative homology groups 
$$H_j(\{u\leq  \delta \},\{u\leq  -\delta\})$$ with integer coefficients, which are well-defined even if $u$ is merely continuous.  The main observation of this paper is that the critical groups are well-behaved whenever $\D^2 u$ is quasiregular: on one hand, we show that if $x_0$ is a point of twice differentiability of $u$ then
$$C_j(u,x_0)= \delta_{j,k} \Z, \qquad k = \ind \D^2 u(x_0),$$
hence the critical groups capture the index of $\D^2 u$; on the other hand,  we prove that they are independent of $x_0$ for $x_0$ in an open and connected subset of $\Omega$ with full measure. From these assertions we conclude that the index of $\D^2 u$ is necessarily a.e.\ constant in $\Omega$.





\section{A brief review of mappings of finite distortion}


We recall that $f\in W^{1,n}_\loc(\Omega,\R^n)$ is said to be a \textit{map of finite distortion} if there is a measurable function $K\colon \Omega\to [0,+\infty]$ such that $|\D f(x)|^n \leq K(x) \det \D f(x)$ and $K(x)<\infty$ for a.e.\ $x\in \Omega$. For such maps, we define the \textit{distortion function}
$$K_f(x) \equiv \begin{cases} \frac{|\D f(x)|^n}{\det \D f(x)} & \tp{if } \det \D f(x)\neq 0,\\
1 &\tp{otherwise.}
\end{cases}.$$
We say that $f$ is quasiregular if $K_f\in L^\infty(\Omega)$.

A continuous map $f\colon \Omega\to \R^n$ is said to be \textit{locally injective} at $x$ if there is a neighborhood $\Omega'\subseteq \Omega$ of $x$ such that $f|_{\Omega'}$ is injective. We write 
$$B_f\equiv \{x\in \Omega: f \text{ is not locally injective at } x\}$$
for the \textit{branch set} of $f$, and we say that $f$ is \textit{locally injective} if $B_f=\emptyset$. We also say that $f$ is \textit{open} if it maps open sets to open sets, and \textit{discrete} if its pre-image of any point is a discrete set.

We are interested in conditions which guarantee that a mapping has a ``small'' branch set, both in a topological and in measure-theoretic sense. The following result is well-known:

\begin{theorem}\label{thm:FD}
Let $f\in W^{1,n}_\loc(\Omega,\R^n)$ be a map of finite distortion. Then $f$ is continuous and a.e.\ differentiable.
Suppose, in addition, that $f$ is non-constant and that $K_f\in L^p_\loc(\Omega)$, where
\begin{equation}
\label{eq:condp}
p>n-1 \text{ if } n>2, \qquad p\geq 1 \text{ if } n=2.
\end{equation} 
Then  $\det \D f>0$ a.e.\ in $\Omega$ and $\Omega\setminus B_f\text{ is an open, connected set with full measure in } \Omega.$
\end{theorem}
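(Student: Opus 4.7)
The plan is to assemble this from the now-standard theory of mappings of finite distortion, as developed in the monographs of Iwaniec--Martin and Hencl--Koskela, separating the assertions according to which hypotheses they require.

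First I would dispose of the unconditional part. Any $f\in W^{1,n}_\loc(\Omega,\R^n)$ of finite distortion admits a continuous representative, by classical results of Vodop'yanov, Manfredi--Villamor, and Iwaniec--\v{S}ver\'ak in the borderline case $n=2$; this representative is then a.e.\ differentiable by the refined Sobolev differentiability theorem adapted to finite-distortion maps. Under the integrability hypothesis $K_f\in L^p_\loc$ with $p$ satisfying \eqref{eq:condp}, I would then invoke the sharp openness-and-discreteness theorem for mappings of finite distortion, which generalises Reshetnyak's classical theorem for quasiregular maps and is due to Kauhanen--Koskela--Mal\'y (with further refinements by Onninen, Zhong, and others). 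This asserts that every non-constant such $f$ is both open and discrete, and is the main analytic obstacle of the statement: the proof relies on delicate capacity and modulus-of-curve-family estimates, and the threshold $p=n-1$ is known to be sharp.

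Once openness and discreteness are in hand, the remaining conclusions are essentially topological. The positivity $\det \D f>0$ a.e.\ follows because a non-constant open discrete map in $W^{1,n}_\loc$ of finite distortion has positive local degree on the complement of $B_f$, and combined with the area formula this forces the Jacobian to be a.e.\ positive. The branch set $B_f$ is closed by its very definition, so $\Omega\setminus B_f$ is automatically open. By the Chernavskii--V\"ais\"al\"a theorem, the branch set of any discrete open map between $n$-manifolds has topological dimension at most $n-2$; hence $B_f$ has Hausdorff dimension at most $n-2$, in particular Lebesgue measure zero, and, being of topological codimension at least two, it cannot disconnect the connected open set $\Omega$. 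This yields all the stated properties of $\Omega\setminus B_f$ and finishes the proof.
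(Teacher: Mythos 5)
Your reduction to openness and discreteness, and the use of the Chernavskii--V\"ais\"al\"a bound $\dim_{\mathrm{top}} B_f\le n-2$ to conclude that $\Omega\setminus B_f$ is open and connected, is exactly the paper's route (the paper quotes these facts from \cite{Hencl2014a}, Theorems 3.2 and 3.4). However, there is a genuine gap in how you obtain $|B_f|=0$: you argue that topological dimension at most $n-2$ implies Hausdorff dimension at most $n-2$, hence measure zero. The implication goes the wrong way -- by Szpilrajn's theorem one only has $\dim_{\mathrm{top}}\le \dim_{\mathcal H}$, so a small topological dimension gives \emph{no} upper bound on Hausdorff dimension or on Lebesgue measure (a totally disconnected set can have positive measure). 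Moreover the intermediate claim is false as a fact about these maps: for quasiregular mappings in dimension $n\ge 3$ the branch set can have Hausdorff dimension strictly larger than $n-2$, indeed arbitrarily close to $n$; its measure-zero property is not a topological consequence of openness and discreteness but requires an analytic argument. The paper closes this point as follows: at every point $x_0\in B_f$ where $f$ is differentiable one has $\det \D f(x_0)=0$ (\cite[I.4.11]{Rickman1993}), while the distortion hypothesis forces $\det \D f>0$ a.e.\ (\cite[Theorem 4.13]{Hencl2014a}); since $f$ is a.e.\ differentiable, $B_f$ must be Lebesgue-null. You should replace your dimension argument by this (or an equivalent) reasoning.

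A smaller remark: your derivation of $\det \D f>0$ a.e.\ from ``positive local degree plus the area formula'' is too sketchy as stated -- the delicate point is to exclude a set of positive measure where $\D f=0$, which positivity of the degree alone does not do. This is, however, a quotable theorem under the hypothesis $K_f\in L^p_\loc$ with $p$ as in \eqref{eq:condp} (it is precisely \cite[Theorem 4.13]{Hencl2014a}, which the paper cites), so citing it directly both repairs this step and supplies the missing ingredient for $|B_f|=0$ above. The unconditional continuity and a.e.\ differentiability part of your argument is fine and matches the paper's references.
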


\begin{proof}
For the continuity and differentiability see  Remark 2.22 and Corollary 2.25 in \cite{Hencl2014a}. The condition on the distortion guarantees that $f$ is open and discrete \cite[Theorem 3.4]{Hencl2014a}, and for such maps $B_f$ has topological dimension at most $n-2$ \cite[Theorem 3.2]{Hencl2014a}, hence $\Omega\setminus B_f$ is still a connected set.\ Concerning the fact that $|B_f|=0$, we note that by \cite[I.4.11]{Rickman1993} we have $\det \D f(x_0)=0$ at every differentiability point $x_0\in B_f$. On the other hand,  the condition on the distortion also implies that $\det \D f>0$ a.e.\ in $\Omega$ \cite[Theorem 4.13]{Hencl2014a}, and since by the first part a.e.\ point is a point of differentiability, the conclusion follows.
\end{proof}

	\section{A brief review of relative homology}	
		
In this section we give a brief review of relative homology, referring the reader to \cite{Hatcher2002} for more details. A \textit{pair of spaces} $(X,A)$ is a topological space $X$ such that $A\subseteq X$. 
A \textit{map of pairs} $f\colon (X,A)\to (Y,B)$ is a continuous map $f\colon X\to Y$ such that $f(A)\subseteq B$. We will also need to consider \emph{homotopies} $h\colon [0,1]\times (X,A)\to (Y,B)$, namely maps such that $h(t,\cdot)$ is a map of pairs for all $t \in [0,1]$.

Given a pair of spaces $(X,A)$, one defines $C_k(X,A)\equiv C_k(X)/C_k(A)$, where $C_k(X)$ denotes the group of $k$-chains in $X$ with integer coefficients. The boundary operator $\p\colon C_k(X)\to C_{k-1}(X)$ maps $C_k(A)$ to $C_{k-1}(A)$ and it descends to the quotient $C_{k}(X,A)$, preserving $\p\circ \p=0$. Thus there is a chain complex
$$\dots  \stackrel{}{\longrightarrow} C_{k+1}(X,A) \stackrel{\p}{\longrightarrow} C_k(X,A) \stackrel{\p}{\longrightarrow} C_{k-1}(X,A)  \stackrel{}{\longrightarrow}\dots $$
whose homology groups $\ker \p/\tp {im\,} \p$ are the \textit{relative singular homology groups} $H_k(X,A)$. 
Classes in $H_k(X,A)$ are represented by $k$-chains $\a\in C_k(X)$ with $\p \a \in C_{k-1}(A)$, and they are trivial if and only if $\a = \p \beta + \gamma$ for $\beta\in C_{k+1}(X)$ and $\gamma\in C_k(A)$.
%
We also note that every map $f\colon (X,A)\to (Y,B)$ induces in a natural way a homomorphism $f_*\colon H_k(X,A)\to H_k(Y,B)$.  
%

The following properties, sometimes called the Eilenberg--Steenrod axioms, are useful:
\begin{enumerate}[label=(\alph*)]
\item\label{it:dim}\textit{Dimension axiom}: $H_k(\{p\})= \delta_{k,0}\Z$, where $\delta_{i,j}$ is the Kronecker delta;
\item\label{it:*}  \textit{Functoriality and naturality:} $(\id_X)_*=\id$,  $(g\circ f)_* = g_* \circ f_*$ and $\p\circ f_* = (f|_A)_*\circ \p$;
\item\label{it:inv} \textit{Homotopy invariance}: if $f_0,f_1\colon (X,A)\to (Y,B)$ are homotopic, i.e.\ if there is $h\colon [0,1]\times (X,A)\to (Y,B)$ such that $h(0,\cdot) = f_0$ and $h(1,\cdot) =f_1$, then $(f_0)_*=(f_1)_*$;
\item\label{it:excision} \textit{Excision}: if $W\subset X$ is open and $\overline W \subseteq \tp{int}(A)$ then the inclusion $i\colon (X\setminus W, A\setminus W) \to (X,A)$ induces an isomorphism $i_*$;
\item\label{it:exact} \textit{Exactness}: if $ B \subseteq A\subseteq X$, the inclusions $i\colon (A,B)\to (X,B)$ and $j\colon (X,B)\to (X,A)$ induce an \emph{exact} sequence:
$$\dots \stackrel{}{\longrightarrow} H_{k+1}(X,A) \stackrel{\p}{\longrightarrow} H_k(A,B)  \stackrel{i_*}{\longrightarrow} H_k(X,B)  \stackrel{j_*}{\longrightarrow} H_k(X,A) \stackrel{}{\longrightarrow} \dots  $$
Recall that a sequence is \emph{exact} if the image of any map of the sequence equals the kernel of the next map in the sequence.
\end{enumerate}	
	 	
	
	Given a subset $Y\subseteq X$, we say that $Y$ is a \textit{deformation retract} of $X$ if there is a homotopy $h\in C([0,1]\times X,X)$ such that $h(0,\cdot)= \id_X$, $h(1,X)\subseteq Y$ and $h(t,\cdot)|_{Y} = \id|_Y$ for any $t\in [0,1]$. The next lemma is then easy to deduce from the above properties:
	
	\begin{lemma}\label{lemma:ret}
	Suppose that $A'\subseteq A\subseteq X'\subseteq X$.
	\begin{enumerate}
	\item\label{it:def1} If $X'$ is a deformation retract of $X$, then $H_k(X,A) \cong H_k(X',A)$ for all $k$;
	\item\label{it:def2} If  $A'$ is a deformation retract of $A$, then $H_k(X,A) \cong H_k(X,A')$ for all $k$.
	\end{enumerate}
	\end{lemma}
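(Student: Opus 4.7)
The plan is to derive part (i) as a direct application of the homotopy invariance and functoriality axioms, and to bootstrap to part (ii) by applying part (i) to the subpair $(A,A')$ and combining the resulting vanishing with the long exact sequence of a triple.

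For part (i), let $h\colon [0,1]\times X\to X$ be the deformation retraction of $X$ onto $X'$. Since $A\subseteq X'$ and $h(t,\cdot)|_{X'}=\id_{X'}$, the homotopy $h$ fixes $A$ pointwise, so it defines a homotopy of pairs $[0,1]\times (X,A)\to (X,A)$. The retraction $r\equiv h(1,\cdot)\colon (X,A)\to (X',A)$ is then itself a map of pairs, and it satisfies $r\circ i=\id_{(X',A)}$, where $i\colon (X',A)\to (X,A)$ is the inclusion, while $h$ exhibits a homotopy $i\circ r\simeq \id_{(X,A)}$. The homotopy invariance axiom (c) together with the functoriality axiom (b) then forces $i_*$ and $r_*$ to be mutually inverse isomorphisms, giving $H_k(X,A)\cong H_k(X',A)$.

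For part (ii), I would first apply part (i) to the nested sequence $A'\subseteq A'\subseteq A'\subseteq A$ (that is, with $A$ in the role of the ambient space $X$ and $A'$ in the roles of both $X'$ and the relative subspace). Since $A'$ is by hypothesis a deformation retract of $A$, this yields $H_k(A,A')\cong H_k(A',A')$, and the right-hand side is zero because the chain complex $C_*(A')/C_*(A')$ is identically zero. Then the long exact sequence of the triple $(X,A,A')$ reads
$$\cdots\longrightarrow H_k(A,A')\longrightarrow H_k(X,A')\longrightarrow H_k(X,A)\longrightarrow H_{k-1}(A,A')\longrightarrow \cdots,$$
and since the two outer terms vanish, the middle map $H_k(X,A')\to H_k(X,A)$ is an isomorphism, as desired.

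The one ingredient used above that is not listed explicitly among the Eilenberg--Steenrod axioms (a)--(e) is the long exact sequence of a triple; however, this is a standard consequence of the exactness axiom (e), obtainable either via the zigzag lemma applied to the short exact sequence of chain complexes $0\to C_*(A,A')\to C_*(X,A')\to C_*(X,A)\to 0$, or by splicing the long exact sequences of the pairs $(A,A')$, $(X,A')$ and $(X,A)$ via naturality. I do not expect any step to be genuinely difficult; the argument is a mechanical unwinding of the axioms, with the only mild subtlety being the verification that $h$ and $r$ really are maps of pairs, for which the nesting hypothesis $A\subseteq X'$ is essential.
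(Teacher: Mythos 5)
Your proof is correct and takes essentially the same route as the paper: part (i) by showing the inclusion $i\colon (X',A)\to (X,A)$ and the retraction are mutually inverse up to a homotopy of pairs (using $A\subseteq X'$ so that $h$ fixes $A$), and part (ii) by combining $H_k(A,A')\cong H_k(A',A')=\{0\}$ with the exact sequence for $A'\subseteq A\subseteq X$. Your only unnecessary worry is the status of the ``long exact sequence of a triple'': the exactness property (e) as stated in the paper is already formulated for a triple $B\subseteq A\subseteq X$, so no additional splicing or zigzag argument is needed.
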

	
		\begin{proof}
	We first prove \ref{it:def1}, so let $h$ be the corresponding homotopy and $i\colon X'\to X$ be the inclusion map.\ Let $r \equiv h(1,\cdot)
\colon (X,A)\to (X,A)$ and let $\tilde r\colon (X,A) \to (X',A)$ simply be defined by $\tilde r(x) = r(x)$ for all $x \in X$.\ Then we have $\tilde r \circ i = \id_{X'}$, and hence by property \ref{it:*} that $\tilde r_* \circ i_* = \id$. Since $i \circ \tilde r = r$, by properties \ref{it:*} and \ref{it:inv} we have $i_*\circ \tilde r_* = r_* = \id$. This yields the fact that $i_*: H_k(X',A) \to H_k(X,A)$ is a isomorphism of groups for every $k$.
	Then \ref{it:def2} follows from property \ref{it:exact}, since by \ref{it:def1} we have $H_k(A,A')\cong H_k(A',A')=\{0\}$.
	\end{proof}

	In this paper we will only need to know concretely the relative homology in the following very simple instance, see e.g.\ \cite[Example 2.17]{Hatcher2002} or \cite[p.\ 172]{Mawhin1989}:
	
	\begin{lemma}\label{lemma:computationhom}
	Let $\mb B^n\subset \R^n$ be the unit ball. Then $H_k(\mb B^n, \mb B^n\backslash \{0\}) \cong \delta_{k,n} \Z.$	
	\end{lemma}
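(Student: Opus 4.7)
The plan is to reduce the pair $(\mb B^n, \mb B^n \setminus \{0\})$ to the standard pair $(\mb B^n, S^{n-1})$ and then exploit the contractibility of $\mb B^n$ via the long exact sequence. The reduction is immediate: the straight-line homotopy $h(t,x) = (1-t)x + t x/|x|$ defines a deformation retract of $\mb B^n \setminus \{0\}$ onto $S^{n-1}$, since $h(0,\cdot) = \id$, $h(1,\mb B^n \setminus \{0\}) \subseteq S^{n-1}$ and $h(t,\cdot)|_{S^{n-1}} = \id_{S^{n-1}}$. Applying Lemma \ref{lemma:ret}\ref{it:def2} with $X' = X = \mb B^n$, $A = \mb B^n \setminus \{0\}$ and $A' = S^{n-1}$ yields $H_k(\mb B^n, \mb B^n \setminus \{0\}) \cong H_k(\mb B^n, S^{n-1})$ for every $k$.

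Next, since $\mb B^n$ is contractible, homotopy invariance \ref{it:inv} combined with the dimension axiom \ref{it:dim} gives $H_k(\mb B^n) = \delta_{k,0} \Z$. Feeding this into the long exact sequence of the pair $(\mb B^n, S^{n-1})$ supplied by property \ref{it:exact} (with $B = \emptyset$), every third term vanishes for $k \geq 2$, so the connecting homomorphism is an isomorphism $H_k(\mb B^n, S^{n-1}) \cong H_{k-1}(S^{n-1})$. The tail of the sequence reads
$$0 \to H_1(\mb B^n, S^{n-1}) \to H_0(S^{n-1}) \xrightarrow{i_\ast} H_0(\mb B^n) \to H_0(\mb B^n, S^{n-1}) \to 0,$$
and for $n \geq 2$ both $S^{n-1}$ and $\mb B^n$ are path-connected, so $i_\ast$ is the identity of $\Z$ and the outer two groups vanish.

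Combining these isomorphisms with the classical computation $H_j(S^{n-1}) \cong \Z$ for $j \in \{0, n-1\}$ and $0$ otherwise (see e.g.\ \cite[Corollary 2.14]{Hatcher2002}) produces the claimed identification $H_k(\mb B^n, \mb B^n \setminus \{0\}) \cong \delta_{k,n} \Z$. I expect the one step that is not purely formal within the axiomatic framework \ref{it:dim}--\ref{it:exact} to be the main obstacle, namely the sphere homology input; if one wishes to stay self-contained, it can be obtained by induction on $n$, either via Mayer--Vietoris applied to the cover of $S^{n-1}$ by its two contractible closed hemispheres, or by iterating the present relative long exact sequence one dimension down. The degenerate case $n = 1$ is handled directly after observing that $\mb B^1 \setminus \{0\}$ deformation retracts onto $S^0 = \{-1, +1\}$: the tail becomes $0 \to H_1 \to \Z^2 \to \Z \to H_0 \to 0$ with surjective middle map of kernel $\Z$, giving $H_1 = \Z$ and $H_0 = 0$, in agreement with $\delta_{k,1}\Z$.
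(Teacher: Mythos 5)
Your argument is correct and is precisely the standard computation that the paper itself does not reprove but only cites (Hatcher, Example 2.17, and Mawhin--Willem): retract $\mb B^n\setminus\{0\}$ onto a sphere, feed the contractibility of the ball into the long exact sequence of the pair, and quote the homology of spheres, so your proof is exactly as self-contained as the paper's citation. One small point of hygiene: elsewhere in the paper $\mb B^n$ denotes the \emph{open} unit ball (note the separate uses of $\overline{\mb B^n}$), so $S^{n-1}\not\subset\mb B^n$ and your straight-line retraction should instead land on, say, $\tfrac12 S^{n-1}$ (or you should first pass to the closed ball); with $A'=\tfrac12 S^{n-1}$ in Lemma \ref{lemma:ret}\ref{it:def2} the rest of your long-exact-sequence argument, including the $n=1$ case, goes through verbatim.
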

%
%
%
%
				
	\section{Local Morse theory and critical groups}
\label{sec:morse}

In this section we establish some Morse-theoretic results for functions with isolated critical points. We are interested in the \textit{local setting} (the function is defined only on an open subset $\Omega\subset \R^n$) in \textit{low regularity} (the function is only $C^1$), and so we present complete proofs for the sake of readability, but we refer the reader to the monographs \cite{Chang1993,Mawhin1989,Milnor1963} for analogous results.

The fundamental idea of Morse theory is that the topology of the sublevel sets of a function can only change when one crosses a critical value, as otherwise the sublevel sets can be deformed into each other using a (pseudo)-gradient flow.  As one crosses a critical value, the change in topology can be captured by the \textit{critical groups}:

	
	\begin{definition}[Critical group]\label{def:crit}
	Let $u\in C^1(\Omega)$, let $x_0\in \Omega$ and write $c\equiv u(x_0)$. Given  $U\subseteq \Omega$ a neighborhood\footnote{That is, $U$ is a set such that there is $\delta>0$ with $B_\delta(x_0)\subset U$.} of $x_0$ and $k\in \{0,\dots,n\}$,  we set
	$$C_{k}(u,x_0)\equiv H_k(\{u\leq c\} \cap U, \{u\leq c\}\cap U\setminus \{x_0\} ).$$
	\end{definition}
	
One can readily check that the definition of critical group does not depend on $U$: if $V$ is another neighborhood of $x_0$ then, by the excision property \ref{it:excision} applied with $X=\{u\leq c\}\cap U$, $A=X\setminus \{x_0\}$ and $W=X\setminus V$, one finds an isomorphism between the critical groups computed with respect to $U$ or to $U\cap V$.	In order to simplify the notation, we now suppose $\Omega=\mb B^n$ and that $u\in C^1(\mb B^n)$ satisfies
\begin{equation}
\label{eq:assum}
u(0)=0, \qquad \{x\in \mb B^n: \D u(x)=0\}=\{0\}.
\end{equation}	
Since $u$ is only $C^1$, we need the following:

\begin{lemma}[Pseudo-gradient vector field]\label{lemma:pseudo}
Given $u\in C^1(\mb B^n)$ satisfying \eqref{eq:assum}, there is a vector field $X\in C^{\infty}(\mb B^n\setminus \{0\}, \R^n)$ such that, in $\mb B^n\setminus \{0\}$, we have
\begin{equation}
\label{eq:pgvf}
|X|\leq 2 |\D u|\quad  \text{ and }\quad |\D u|^2 \leq \langle \D u, X\rangle.
\end{equation}
\end{lemma}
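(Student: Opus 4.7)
The plan is to construct $X$ by gluing together locally constant ``approximate gradients'' via a smooth partition of unity on $\mathbb{B}^n\setminus\{0\}$. The idea is that away from $0$ the vector field $\D u$ is continuous and non-zero, so on a small enough neighborhood of any $p\in\mathbb{B}^n\setminus\{0\}$ the constant vector $v_p\equiv\frac{3}{2}\D u(p)$ will simultaneously satisfy the bound $|v_p|<2|\D u(x)|$ and the angle condition $\langle\D u(x),v_p\rangle>|\D u(x)|^2$ for every $x$ in that neighborhood.

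First I would fix $p\in\mathbb{B}^n\setminus\{0\}$ and set $v_p=\frac{3}{2}\D u(p)$. Evaluating at $x=p$ gives $|v_p|=\frac{3}{2}|\D u(p)|<2|\D u(p)|$ and $\langle \D u(p),v_p\rangle=\frac{3}{2}|\D u(p)|^2>|\D u(p)|^2$, with strict inequalities. By continuity of $\D u$ and the fact that $\D u(p)\neq 0$ (since $p\neq 0$ is not a critical point by \eqref{eq:assum}), there is an open ball $U_p\subset \mathbb{B}^n\setminus\{0\}$ centered at $p$ on which these strict inequalities persist for all $x\in U_p$.

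Next I would pass to a partition of unity. The manifold $\mathbb{B}^n\setminus\{0\}$ is paracompact, so the open cover $\{U_p\}_{p\in \mathbb{B}^n\setminus\{0\}}$ admits a locally finite smooth refinement $\{V_\alpha\}$ together with a smooth partition of unity $\{\rho_\alpha\}$ subordinate to $\{V_\alpha\}$, where each $V_\alpha\subseteq U_{p_\alpha}$ for some $p_\alpha$. Set
\[
X(x)\equiv \sum_\alpha \rho_\alpha(x)\, v_{p_\alpha}, \qquad x\in\mathbb{B}^n\setminus\{0\}.
\]
Local finiteness ensures the sum is well-defined and that $X\in C^\infty(\mathbb{B}^n\setminus\{0\},\R^n)$.

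Finally I would verify \eqref{eq:pgvf} by a convexity argument. For each $x$, only finitely many $\rho_\alpha(x)$ are non-zero, and for each such $\alpha$ we have $x\in V_\alpha\subseteq U_{p_\alpha}$, so $|v_{p_\alpha}|<2|\D u(x)|$ and $\langle \D u(x), v_{p_\alpha}\rangle > |\D u(x)|^2$. Summing against the $\rho_\alpha(x)$, which are non-negative and sum to $1$, yields
\[
|X(x)|\leq \sum_\alpha \rho_\alpha(x)|v_{p_\alpha}|\leq 2|\D u(x)|, \qquad \langle \D u(x), X(x)\rangle \geq |\D u(x)|^2,
\]
as required. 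No real obstacle appears here; the only point of care is verifying that $\mathbb{B}^n\setminus\{0\}$ is paracompact so that smooth partitions of unity are available, which holds since it is a smooth manifold.
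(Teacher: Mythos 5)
Your proof is correct and follows essentially the same route as the paper: pick a locally constant vector (the paper's $w_x$, your explicit choice $\tfrac32\D u(p)$) satisfying the strict inequalities near each point, then glue with a smooth partition of unity, using linearity/triangle inequality to preserve \eqref{eq:pgvf} under convex combination.
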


\begin{proof}
Since $\D u(x)\neq 0$ if $x\in \mb B^n\setminus \{0\}$, for any such $x$ one can find a vector $w_x\in \R^n$ such that the inequalities in \eqref{eq:pgvf} hold at $x$ with $X(x)=w_x$; by continuity of $\D u$, they must in fact hold in a neighborhood $W_x$ of $x$.  Taking a locally finite cover $\{W_{x_i}\}_{i\in I}$ of $\mb B^n\setminus \{0\}$ with associated partition of unity $\{\varphi_i\}_{i\in I}$, we set $X\equiv \sum_{i\in I} \varphi_{i} w_{x_i}. $
\end{proof}
	

In particular, we can define a pseudo-gradient flow $\phi_t \equiv \phi(t,\cdot)\colon \mb B^n\backslash \{0\}\to \R^n$ as the unique solution of the Cauchy problem
\begin{equation}
\label{eq:flow}
\begin{cases}
\frac{\d}{\d t} \phi_t(x) = - \frac{X(\phi_t(x))}{|X(\phi_t(x))|^2},\\
\phi_0(x) =x,
\end{cases}
\end{equation}
which is defined in a maximal interval of existence $t\in \left]t_-(x),t_+(x)\right[$. The choice $-X/|X|^2$ is convenient as it ensures that the flow reaches every point, including the origin, in finite time. Before giving the proof of the main Lemmas \ref{lemma:ngbd}-\ref{lemma:def}, it is better to record a few facts about (pseudo)-gradient flows of functions fulfilling \eqref{eq:assum}. This will be done in the next Subsection \ref{sub:GTR}, while the proof of the main Lemmas is the content of Subsection \ref{sub:male}. These will be instrumental to get the crucial properties of the critical groups of Subsection \ref{sub:crit}.

\subsection{Some general results}\label{sub:GTR}
 
The next two lemmas provide simple but important estimates.
 
 \begin{lemma}\label{lemma:tec1}
 	Let $u\in C^1(\mb B^n)$ satisfy \eqref{eq:assum}. Then, for any $x \in \mb B^n$ and any $0 < t \le s < t_+(x)$, we have:
 	\begin{equation}
 		\label{eq:cauchyuphi}
 		u(\phi_{s}(x)) - u(\phi_{t}(x))\leq - \frac{s-t}{4}.
 	\end{equation}
 	In particular, $t_+(x) <+ \infty$ for all $x \neq 0$.
 \end{lemma}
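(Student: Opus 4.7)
The plan is a direct chain-rule computation combined with the pseudo-gradient inequalities from Lemma \ref{lemma:pseudo}. For any $x \in \mb B^n$ with $x \neq 0$ and $\tau \in (0, t_+(x))$, the flow equation \eqref{eq:flow} and the chain rule give
$$\frac{\d}{\d \tau} u(\phi_\tau(x)) \;=\; -\frac{\langle \D u, X\rangle(\phi_\tau(x))}{|X(\phi_\tau(x))|^2}.$$
Since $\phi_\tau(x) \neq 0$ whenever $\tau < t_+(x)$, condition \eqref{eq:assum} guarantees $|\D u(\phi_\tau(x))| > 0$. The second inequality in \eqref{eq:pgvf} bounds the numerator from below by $|\D u(\phi_\tau(x))|^2$, while the first bounds the denominator from above by $4\,|\D u(\phi_\tau(x))|^2$. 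Dividing these bounds yields $\frac{\d}{\d \tau} u(\phi_\tau(x)) \leq -\tfrac{1}{4}$, and integrating over $[t, s] \subset (0,t_+(x))$ produces \eqref{eq:cauchyuphi}.

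For the claim $t_+(x) < +\infty$, I would argue by contradiction. If $t_+(x) = +\infty$ for some $x \neq 0$, then fixing $t > 0$ and letting $s \to +\infty$ in \eqref{eq:cauchyuphi} forces $u(\phi_s(x)) \to -\infty$. But the orbit $\{\phi_s(x)\}_{s\geq 0}$ lies in the bounded set $\mb B^n$, so by compactness of $\overline{\mb B^n}$ some subsequence $\phi_{s_n}(x)$ converges to a point $y \in \overline{\mb B^n}$. Continuity of $u$ at any such $y$ (together with $u(0)=0$ in the case $y = 0$) forces $u(\phi_{s_n}(x))$ to a finite limit, contradicting the divergence to $-\infty$.

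The derivative estimate is essentially built into the definition of a pseudo-gradient, so the computational core is routine. The only genuinely subtle step is excluding the accumulation case $y \in \partial \mb B^n$ in the contradiction argument, which requires the boundedness of $u$ on the closure of the ball; this can be arranged by implicitly shrinking $\mb B^n$ so that $u$ extends continuously to $\overline{\mb B^n}$, which is harmless since the subsequent Morse-theoretic constructions depend only on a small neighborhood of the critical point.
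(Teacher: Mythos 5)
Your derivative estimate is exactly the paper's argument: chain rule plus \eqref{eq:flow}, then the two inequalities of \eqref{eq:pgvf} give $\tfrac{\d}{\d \tau}u(\phi_\tau(x))\le -\tfrac14$, and integration yields \eqref{eq:cauchyuphi}. For the finiteness of $t_+(x)$, however, the paper is much more direct: since $u$ is bounded (this is how the $C^1(\mb B^n)$ setting is used throughout, cf.\ the norm $\|\cdot\|_{C^1(\mb B^n)}$ in Corollary \ref{cor:continuity}), the linear decay $u(\phi_s(x))\le u(\phi_t(x))-\tfrac{s-t}{4}$ immediately forbids $t_+(x)=+\infty$; no compactness or subsequence extraction is needed, and once you know $u$ is bounded below the divergence $u(\phi_s(x))\to-\infty$ is already a contradiction without identifying an accumulation point $y$. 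Your patch for the boundary case is also not quite right as stated: if you shrink $\mb B^n$ to a smaller ball, the maximal existence time of the flow on the smaller domain can only decrease, so proving its finiteness does not prove $t_+(x)<+\infty$ for the flow on $\mb B^n$, which is the quantity the later lemmas (positive invariance of $W$, the deformation retract) actually use. The honest resolution is simply to invoke boundedness (from below) of $u$ on $\mb B^n$, exactly as the paper does; with that, your argument collapses to the paper's one-line conclusion.
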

 \begin{proof}
 To see \eqref{eq:cauchyuphi}, we write
 \[
 u(\phi_{s}(x)) - u(\phi_{t}(x)) =  \int_{t}^{s} \frac{\d}{\d z} u(\phi_z(x)) \d z\overset{\eqref{eq:flow}}{=} -\int_{t}^{s} \frac{ \langle \D u (\phi_z(x)), X( \phi_z(x))\rangle}{|X(\phi_z(x))|^2} \d z\overset{\eqref{eq:pgvf}}{\leq} - \frac{s-t}{4}.
 \]
 Due to the boundedness of $u$, we infer $t_+(x) <+ \infty$ for all $x \neq 0$.
 \end{proof}
 
	\begin{lemma}\label{lemma:tec2}
		Let $u\in C^1(\mb B^n)$ satisfy \eqref{eq:assum} and let $K$ be a compact set such that $0 \notin K$.
		If we have $\phi_t(x) \in K$ for all $t_1\le t \le t_2$, then 
		\begin{equation}
			\label{eq:cauchygen}
			|\phi_{t_2}(x)- \phi_{t_1}(x)| \leq \frac{t_2-t_1}{\min_{K}|Du|}.
		\end{equation}
	\end{lemma}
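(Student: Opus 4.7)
The plan is to estimate the length of the flow curve directly from the ODE \eqref{eq:flow} and the pseudo-gradient inequalities \eqref{eq:pgvf}. Since $K$ is compact, $0\notin K$, and $\D u$ is continuous and non-vanishing away from the origin (by \eqref{eq:assum}), the quantity $\min_K|\D u|$ is a strictly positive real number, so the right-hand side of \eqref{eq:cauchygen} makes sense.

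First I would write, using \eqref{eq:flow} and the fundamental theorem of calculus,
\[
|\phi_{t_2}(x)-\phi_{t_1}(x)| \leq \int_{t_1}^{t_2} \left|\frac{\d}{\d z}\phi_z(x)\right| \d z = \int_{t_1}^{t_2} \frac{|X(\phi_z(x))|}{|X(\phi_z(x))|^2}\d z = \int_{t_1}^{t_2} \frac{\d z}{|X(\phi_z(x))|}.
\]
The integrand is well-defined for $z\in[t_1,t_2]$ because $\phi_z(x)\in K$ and hence $\phi_z(x)\neq 0$, so the vector field $X$ (and in fact $\D u$) does not vanish there.

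Next I would use the second inequality in \eqref{eq:pgvf} together with Cauchy--Schwarz to lower bound $|X|$ in terms of $|\D u|$: since
\[
|\D u(y)|^2 \leq \langle \D u(y), X(y)\rangle \leq |\D u(y)|\,|X(y)|
\]
for every $y\in\mb B^n\setminus\{0\}$, dividing by $|\D u(y)|$ (which is positive on $K$) yields $|X(y)|\geq |\D u(y)|$. Plugging $y=\phi_z(x)\in K$ into this inequality gives $|X(\phi_z(x))|\geq |\D u(\phi_z(x))|\geq \min_K|\D u|$.

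The conclusion then follows by substituting this pointwise bound into the integral above:
\[
|\phi_{t_2}(x)-\phi_{t_1}(x)| \leq \int_{t_1}^{t_2} \frac{\d z}{\min_K|\D u|} = \frac{t_2-t_1}{\min_K|\D u|}.
\]
There is no real obstacle in this argument; the only thing to double-check is that the lower bound $\min_K|\D u|>0$ is legitimate, which is immediate from compactness of $K$, continuity of $\D u$, and the fact that $0\notin K$ combined with \eqref{eq:assum}.
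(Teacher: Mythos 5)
Your proof is correct and follows essentially the same route as the paper's: write the displacement as the integral of $\d\phi_z/\d z = -X(\phi_z)/|X(\phi_z)|^2$, bound the integrand by $1/|X(\phi_z)|$, and then use the pseudo-gradient inequalities to get $|X|\geq|\D u|\geq\min_K|\D u|$. You merely make explicit the Cauchy--Schwarz step that the paper compresses into ``using \eqref{eq:pgvf}--\eqref{eq:flow}.''
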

	\begin{proof}
We get \eqref{eq:cauchygen} by estimating
		\[
		|\phi_{t_2}(x)- \phi_{t_1}(x)|= \left\lvert \int_{t_2}^{t_1}  \frac{\d}{\d s} \phi_s(x) \d s\right\rvert \leq \int_{t_1}^{t_2} \frac{\d s}{|X(\phi_s(x))|} \leq \frac{t_2-t_1}{\min_{K}|Du|}
		\]
		using \eqref{eq:pgvf}--\eqref{eq:flow}.
	\end{proof}
	
	\begin{corollary}\label{cor:tec2}
			Let $0 < r < 1$ and set
		\begin{equation}\label{eq:delta}
			\delta(r)\equiv \min_{\overline{B_r(0)}\setminus B_{r/2}(0)} |\D u|\overset{\eqref{eq:assum}}{>}0.
		\end{equation} 
		For any $x \in \overline{B_{\frac{r}{2}}}$, if $|\phi_t(x)|>r$ for some $t\in \left]0,t_+(x)\right[$, then there exist $0 < t_1 < t_2 < t$ fulfilling 
		\begin{equation}\label{eq:conti}
			|\phi(t_1,x)|=\tfrac r 2,\quad  |\phi(t_2,x)| = r, \quad \phi([t_1,t_2],x)\subseteq \{\tfrac r 2 \leq |x|\leq r\},
		\end{equation}
		\begin{equation}
			\label{eq:cauchyphi}
			\frac r 2 \leq |\phi_{t_1}(x)- \phi_{t_2}(x)| \leq \frac{t_2-t_1}{\delta}.
		\end{equation}
	\end{corollary}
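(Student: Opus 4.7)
The statement is geometric: starting from a point inside the ball of radius $r/2$, the continuous trajectory $s \mapsto \phi_s(x)$ must cross both the sphere of radius $r/2$ and the sphere of radius $r$ before reaching $\phi_t(x)$, which lies outside $B_r$. So the main task is to identify the correct crossing times and then apply the preceding estimates on the annular region.

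The plan is to define $t_2$ as the \emph{first} time the trajectory reaches the outer sphere, and $t_1$ as the \emph{last} time before $t_2$ it touches the inner sphere, namely
\[
t_2 \equiv \inf\bigl\{ s\in[0,t]\,:\,|\phi_s(x)|\geq r\bigr\}, \qquad t_1 \equiv \sup\bigl\{ s\in[0,t_2]\,:\,|\phi_s(x)|\leq r/2\bigr\}.
\]
Both sets are nonempty (the first contains $t$ since $|\phi_t(x)|>r$; the second contains $0$ since $|x|\leq r/2$), and continuity of $s\mapsto |\phi_s(x)|$ together with the strict inequality $|\phi_t(x)|>r$ gives $0<t_2<t$ and $|\phi_{t_2}(x)|=r$. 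Similarly, from the definition of $t_1$ as a supremum, one gets $|\phi_{t_1}(x)|\leq r/2$; and for $s$ slightly larger than $t_1$ (still $\leq t_2$) one has $|\phi_s(x)|>r/2$, so by continuity $|\phi_{t_1}(x)|\geq r/2$, and hence $|\phi_{t_1}(x)|=r/2$. The equality $t_1=t_2$ is impossible (it would force $r/2=r$), so $t_1<t_2$. By construction one then has $|\phi_s(x)|\in[r/2,r]$ for every $s\in[t_1,t_2]$, proving \eqref{eq:conti}.

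For \eqref{eq:cauchyphi}, the lower bound is the reverse triangle inequality,
\[
|\phi_{t_1}(x)-\phi_{t_2}(x)|\geq \bigl||\phi_{t_2}(x)|-|\phi_{t_1}(x)|\bigr|=r-r/2=r/2,
\]
while the upper bound comes directly from Lemma \ref{lemma:tec2} applied on the compact annulus $K\equiv \overline{B_r(0)}\setminus B_{r/2}(0)$, which does not contain the origin, and on which $\min_K|\D u|=\delta(r)$ by \eqref{eq:delta}.

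There is no real obstacle here; the only mildly delicate point is to justify that $t_1,t_2$ are actually attained by continuity and to check that the flow stays in the annulus on $[t_1,t_2]$, which is immediate from the ``first/last hitting time'' definitions. (The strict inequality $0<t_1$ is only an issue in the boundary case $|x|=r/2$, which can be handled either by observing that all the stated properties already hold when $t_1=0$, or by shrinking $r$ slightly so that $x$ lies strictly inside $B_{r/2}$.)
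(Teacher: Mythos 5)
Your proof is correct and follows essentially the same route as the paper, which simply invokes continuity of the flow to produce the crossing times and then applies Lemma \ref{lemma:tec2} (estimate \eqref{eq:cauchygen}) on the annulus $\overline{B_r(0)}\setminus B_{r/2}(0)$; your first/last hitting-time construction and the reverse triangle inequality just make these steps explicit. Your remark about the borderline case $|x|=r/2$ (where one may get $t_1=0$) is a fair observation of a detail the paper also glosses over, and your resolution that all the relevant conclusions persist with $t_1=0$ is the right one, since that is all the later applications use.
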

	
	\begin{proof}
Note that	\eqref{eq:conti} follows by continuity of the flow and \eqref{eq:cauchyphi} follows from \eqref{eq:cauchygen}-\eqref{eq:conti}.
	\end{proof}
	
The next lemma provides an important continuity statement:\	if the trajectory starting from a point $x_0$ gets arbitrarily close to the origin, then trajectories starting at points close to $x_0$ will also get arbitrarily close to the origin.
	
	\begin{lemma}\label{lemma:contragen}
Assume $\{x_j\}_j \subset \mb B^n\setminus \{0\}$ is a sequence converging to a point $x_0 \in \mb B^n$ such that either $x_0=0$ or there exists $T > 0$ such that $\inf_{t \in [0,T[ }|\phi_t(x_0)|=0$. Then, if $x_0\neq 0$, we have $T = t_+(x_0)$. Setting also $T = 0$ in the case $x_0 = 0$, if $0\le T_j < t_+(x_j)$ and $T_j\to T$, we have
		\begin{equation}\label{eq:conv}
			\lim_{j \to \infty}\phi_{T_j}(x_j) = 0.
		\end{equation}
	\end{lemma}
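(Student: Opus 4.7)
The plan is to handle the two possibilities $x_0=0$ and $x_0\ne 0$ separately, with the second requiring an intermediate step to upgrade the infimum hypothesis into a limit as $t\to T^-$. In the case $x_0=0$ we have $T=0$, so for any $\e>0$ and all $j$ large enough one has $|x_j|\le \e/2$ together with $T_j<\e\,\delta(\e)/2$, where $\delta(\e)$ is as in \eqref{eq:delta}. The contrapositive of Corollary \ref{cor:tec2}, applied with $r=\e$ and $x=x_j$, then forces $|\phi_{T_j}(x_j)|\le\e$, yielding $\phi_{T_j}(x_j)\to 0$.

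For the case $x_0\ne 0$, I would first show $T=t_+(x_0)$. The bound $T\le t_+(x_0)$ is immediate, since $\phi_t(x_0)$ must be defined on $[0,T)$. Conversely, taking $t_k\in[0,T)$ with $|\phi_{t_k}(x_0)|\to 0$, if $\{t_k\}$ had an accumulation point $t^*<t_+(x_0)$, the continuity of $s\mapsto\phi_s(x_0)$ on $[0,t^*]$ would force $\phi_{t^*}(x_0)=0$, contradicting $\phi_{t^*}(x_0)\in\mb B^n\setminus\{0\}$. Hence every accumulation point of $\{t_k\}$ is $T$, so $t_k\to T^-$ and $T\ge t_+(x_0)$.

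The crux is upgrading the infimum hypothesis to $\phi_t(x_0)\to 0$ as $t\to T^-$. Suppose for contradiction that $|\phi_{s_k}(x_0)|\ge\eta>0$ for some $s_k\to T^-$. Combined with $t_k\to T^-$ satisfying $|\phi_{t_k}(x_0)|<\eta/4$ (from the previous paragraph), after passing to subsequences we may arrange $t_1<s_1<t_2<s_2<\cdots$. On each $[t_k,s_k]$, continuity together with $|\phi_{t_k}(x_0)|<\eta/4<\eta/2\le|\phi_{s_k}(x_0)|$ produces times $\tau_k<\sigma_k$ with $|\phi_{\tau_k}(x_0)|=\eta/4$, $|\phi_{\sigma_k}(x_0)|=\eta/2$, and the trajectory confined to the compact annulus $A=\{\eta/4\le|x|\le\eta/2\}$ on $[\tau_k,\sigma_k]$. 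Lemma \ref{lemma:tec2} applied with $K=A$ then gives
\[
\sigma_k-\tau_k \ge \min_{A}|\D u|\cdot|\phi_{\sigma_k}(x_0)-\phi_{\tau_k}(x_0)| \ge \tfrac{\eta}{4}\min_{A}|\D u| =: c > 0.
\]
The intervals $[\tau_k,\sigma_k]\subset[0,T)$ are pairwise disjoint, each of length at least $c$, contradicting the finiteness $T<+\infty$ guaranteed by Lemma \ref{lemma:tec1}.

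With this upgrade in hand, the convergence follows as in the first case. Fix $\e>0$ and choose $t^*<T$ with both $|\phi_{t^*}(x_0)|<\e/4$ and $T-t^*<\e\,\delta(\e)/4$. Since $\phi_\cdot(x_0)$ traces a compact subset of $\mb B^n\setminus\{0\}$ on $[0,t^*]$, standard continuous dependence on initial data for the smooth ODE \eqref{eq:flow} yields $\phi_{t^*}(x_j)\to\phi_{t^*}(x_0)$; hence $|\phi_{t^*}(x_j)|<\e/2$ and $T_j-t^*<\e\,\delta(\e)/2$ for $j$ large, with also $t^*<T_j<t_+(x_j)$. The contrapositive of Corollary \ref{cor:tec2}, applied to the trajectory starting at $\phi_{t^*}(x_j)$ over time $T_j-t^*$ with $r=\e$, then gives $|\phi_{T_j}(x_j)|\le\e$. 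I expect the main obstacle to be the upgrade step: the raw infimum hypothesis does not by itself preclude oscillations of $\phi_\cdot(x_0)$ between neighborhoods of the origin and more distant sets, and both the finiteness $T<+\infty$ and the uniform transit-time lower bound of Lemma \ref{lemma:tec2} are essential to close the argument.
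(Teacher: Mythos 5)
Your proof is correct, and it leans on the same quantitative ingredients as the paper (the annulus transit-time bound of Lemma \ref{lemma:tec2}/Corollary \ref{cor:tec2}, finiteness of $t_+$ from Lemma \ref{lemma:tec1}, and continuous dependence, i.e.\ lower semicontinuity of $t_+$), but it is organized along a genuinely different route. The paper runs a single argument by contradiction on the conclusion \eqref{eq:conv}: assuming $|\phi_{T_j}(x_j)|\ge\e$, it uses only a sequence of times $r_j\to T$ with $|\phi_{r_j}(x_0)|\le\e/4$ (which comes straight from the infimum hypothesis), then a diagonal argument to produce times $s_j<T_j$, $s_j\to T$, with $|\phi_{s_j}(x_j)|<\e/2$, and finally applies Corollary \ref{cor:tec2} to the trajectories of the $x_j$ themselves on $[s_j,T_j]$ to contradict $T<\infty$. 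You instead argue directly: you first upgrade the hypothesis to the stronger statement $\lim_{t\to T^-}\phi_t(x_0)=0$ (your disjoint-transit-interval oscillation argument, which is sound since $T=t_+(x_0)<\infty$ by Lemma \ref{lemma:tec1}), and then run an $\e$-argument on the $x_j$ via continuous dependence at the fixed time $t^*$ and the contrapositive of Corollary \ref{cor:tec2} applied, through the semigroup property of the autonomous flow, to the restarted trajectory from $\phi_{t^*}(x_j)$. What your approach buys is a cleaner, quantifier-explicit argument that avoids the diagonal construction and treats $x_0=0$ and $x_0\ne 0$ uniformly via the same contrapositive step; what it costs is the extra upgrade lemma, which the paper's proof deliberately sidesteps by working only with a sequence of good times along the $x_0$-trajectory. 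Two small points you should make explicit but which are routine: the identity $\phi_{T_j}(x_j)=\phi_{T_j-t^*}(\phi_{t^*}(x_j))$ together with $t_+(\phi_{t^*}(x_j))=t_+(x_j)-t^*$ (autonomy and uniqueness of \eqref{eq:flow}, using $|X|\ge|\D u|>0$ on $\mb B^n\setminus\{0\}$ so the field is smooth there), and the restriction to $\e<1$ so that $\delta(\e)$ in \eqref{eq:delta} is defined.
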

\begin{proof}
Suppose by contradiction that \eqref{eq:conv} fails. We would then find $\e > 0$ such that
	\begin{equation}\label{eq:contrale}
		|\phi_{T_j}(x_j)|\ge \eps > 0, \quad \forall j.
	\end{equation}
	Above and in what follows we do not relabel subsequences.\ If $x_0 \neq 0$ then clearly $T = t_+(x_0)$ and there exists a sequence of times $r_j \to T$ such that
	\[
	|\phi_{r_j}(x_0)| \le \frac{\eps}{4}, \quad \forall j.
	\]
Using the lower semicontinuity of $t_+$, we see that $\lim_{j\to \infty}\phi_{t}(x_j) = \phi_t(x_0)$ for each $t < T = t_+(x_0) \le \liminf_jt_+(x_j)$. Thus, through a diagonal argument we can find an increasing sequence $s_j \to T$ with $s_j < T_j$ for all $j$ such that
	\begin{equation}\label{eq:contrafin5}
		|\phi_{s_j}(x_j)| < \frac{\eps}{2}, \quad \forall j.
	\end{equation}
	If $x_0 = 0$, simply set $s_j = 0$, and notice that \eqref{eq:contrafin5} holds due to $\phi_0(x_j) = x_j \to x_0 = 0$.\ In either case, we apply Corollary \ref{cor:tec2} to find sequences $[t_j,t_j']\subset [s_j,T_j]$ such that $t_j,t_j'\to T$ and
	\[
	|\phi(t_j,x_j)|= \frac\eps 2, \quad |\phi(t_j',x_j)| = \eps , \quad \phi([t_j,t_j'],x_j) \subseteq \{\tfrac \eps 2 \leq |x|\leq \eps\}.
	\]
	By \eqref{eq:cauchyphi} we then get a $\delta > 0$ such that, for all $j \in \N$,
	\begin{equation}\label{eq:contrfin6}
		\frac{\e}{2} \le |\phi_{t_j'}(x_j)-\phi_{t_j}(x_j)| \le \frac{|t_j - t_j'|}{\delta}.
	\end{equation}
	This is a contradiction for large $j$, as $t_j,t_j' \to T$ and $T < +\infty$ in both cases, see Lemma \ref{lemma:tec1}.
\end{proof}

\subsection{Main lemmas}\label{sub:male}

To use $\phi_t$ to deform the sublevel sets of $u$, we first need a suitable neighborhood of the origin:

		\begin{lemma}[Construction of $W$]\label{lemma:ngbd}
	Let $u\in C^1(\mb B^n)$ satisfy \eqref{eq:assum}.
	There is $\e>0$ and a neighborhood $W\subset  \mb B^n$ of $0$ such that:
	\begin{enumerate}
	\item\label{it:posinv}  $W$ is a neighborhood of 0, closed in $\mb B^n$, such that $W\setminus\{0\}$ is positively invariant\footnote{The need to exclude 0 in this definition stems from the technical issue that the vector field whose flow we are considering is not defined at that point.} for $\{\phi_t\}$, i.e.\ if $x\in W\setminus\{0\}$ and $t\in [0,t_+(x)[$ then also $\phi_t(x) \in W\setminus\{0\}$;
	\item\label{it:complete} $\{-\eps \le u \leq \e\}\cap W\subseteq\tfrac 1 2 \overline{\mb B^n}$ and so in particular this set is compact.
	\end{enumerate}
	\end{lemma}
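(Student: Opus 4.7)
My plan is to remove from $\mb B^n$ every point whose forward pseudo-gradient orbit ever enters an open ``danger region'' $A$. Let $\delta \equiv \min_{\overline{B_{1/2}} \setminus B_{1/4}}|\D u|$, which is strictly positive by \eqref{eq:assum}; then fix $\e > 0$ sufficiently small relative to $\delta$ (e.g.\ $\e < \delta/128$), and set
\[
A \equiv \{x \in \mb B^n : |u(x)| < 2\e \text{ and } |x| > 1/2\}, \qquad W \equiv \{0\} \cup \{x \in \mb B^n\setminus\{0\} : \phi_t(x) \notin A \text{ for all } t \in [0, t_+(x))\}.
\]

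Forward invariance of $W \setminus \{0\}$ is immediate from the semigroup identity $\phi_{t+s} = \phi_t \circ \phi_s$: if the orbit of $x$ avoids $A$, then so does the orbit of every $\phi_s(x)$. Closedness of $W$ in $\mb B^n$ uses that $A$ is open and that $\phi$ is continuous on its (open) domain: the set $\{(t,x): 0 \le t < t_+(x),\ \phi_t(x) \in A\}$ is open, and its projection onto the $x$-coordinate equals $W^c \setminus \{0\}$, which is therefore open in $\mb B^n$. Condition \ref{it:complete} is encoded directly in the definition: if $x \in W$ satisfies $|u(x)| \le \e < 2\e$, taking $t=0$ in the avoidance condition rules out $|x| > 1/2$, so $x \in \tfrac{1}{2}\overline{\mb B^n}$; the intersection $\{-\e \le u \le \e\} \cap W$ is then closed in $\mb B^n$ and contained in the compact half-ball, hence compact.

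The main obstacle is to show that $W$ is a neighborhood of $0$. The guiding intuition is a quantitative balance: once $\e$ is small enough, orbits starting very close to $0$ drop below $u = -2\e$ in strictly less time than they could need to reach $\{|x| > 1/2\}$. To implement this, I would pick $\rho > 0$ with $B_\rho \subset B_{1/4}$ and $u < 2\e$ on $B_\rho$, with $\e$ additionally chosen so that $4(u(x) + 2\e) < \delta/4$ for every $x \in B_\rho$ (all possible since $u(0)=0$). For $x \in B_\rho\setminus\{0\}$, let $t^* \in [0, t_+(x)]$ be the first time with $u(\phi_{t^*}(x)) \le -2\e$ (and $t^* = t_+(x)$ if this never happens). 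Lemma \ref{lemma:tec1} then yields $t^* \le 4(u(x) + 2\e) < \delta/4$, while Corollary \ref{cor:tec2} (with $r = 1/2$) forces any orbit starting in $\overline{B_{1/4}}$ to need time at least $\delta/4$ to reach $\{|x| > 1/2\}$. Therefore $|\phi_t(x)| \le 1/2$ for all $t \in [0, t^*)$, and monotonicity together with the choice of $\rho$ imply $u(\phi_t(x)) \in (-2\e, 2\e)$ on this range, so $\phi_t(x) \notin A$; and for $t \ge t^*$, Lemma \ref{lemma:tec1} gives $u(\phi_t(x)) \le -2\e$, which places $\phi_t(x)$ outside $\{|u| < 2\e\} \supset A$. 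The whole forward orbit of $x$ therefore avoids $A$, so $x \in W$.

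The core difficulty lies entirely in this balance: the pseudo-gradient flow can travel very fast near $0$ (where $|\D u|$ is small), but Corollary \ref{cor:tec2} translates the non-vanishing of $|\D u|$ on the annulus $\{1/4 \le |x| \le 1/2\}$ into a uniform lower bound on the crossing time, which can be contrasted with the definite decrease of $u$ along orbits furnished by Lemma \ref{lemma:tec1}. This is exactly the trade-off that the machinery of Subsection \ref{sub:GTR} was designed to provide.
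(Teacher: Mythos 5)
Your proposal is correct, but it builds $W$ in a way that is dual to the paper's construction. The paper takes $W$ to be the closure in $\mb B^n$ of the forward flow-out $Y=\{\phi_t(x):x\in A\setminus\{0\},\ t\in[0,t_+(x))\}$ of the small set $A=\overline{\tfrac14\mb B^n}\cap\{u\le \delta/32\}$; with that choice, being a closed, positively invariant neighborhood of $0$ is immediate, and all the work goes into property \ref{it:complete}, proved by contradiction: a point of $Y$ with $|y|>1/2$ and $|u(y)|<2\e$ must have crossed the annulus $\{\tfrac14\le|x|\le\tfrac12\}$, which by Corollary \ref{cor:tec2} takes time at least $\delta/4$ and hence, via Lemma \ref{lemma:tec1}, forces a drop of $u$ by at least $\delta/16$, giving $u(y)\le-4\e$. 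You instead define $W$ as the set of points whose forward orbit never meets the open ``danger region'' $\{|u|<2\e\}\cap\{|x|>1/2\}$; then positive invariance (semigroup property), closedness (the escapers are the projection of an open subset of the flow's domain, using lower semicontinuity of $t_+$ and continuity of $\phi$), and \ref{it:complete} hold by construction, and the work shifts to showing that $W$ is a neighborhood of $0$ — which you do with exactly the same quantitative trade-off: crossing the annulus costs time greater than $\delta/4$, while $u$ drops below $-2\e$ within time $4(u(x)+2\e)<\delta/4$, after which the orbit stays in $\{u\le-2\e\}$ by monotonicity. Your route buys trivial verification of \ref{it:posinv} and \ref{it:complete} at the price of the projection argument for closedness and the explicit neighborhood estimate (note that with $\e<\delta/128$, simply requiring $u<2\e$ on $B_\rho$ already yields $4(u(x)+2\e)<\delta/4$, so your slightly circular-sounding choice of $\e$ and $\rho$ is easily untangled); the paper's flow-out makes the neighborhood property trivial instead. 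Either $W$ serves equally well in the later applications (Lemma \ref{lemma:def} and Proposition \ref{prop:nicecritgroup}).
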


\begin{proof}
Consider $\delta= \delta(\tfrac 1 2)>0$, defined as in \eqref{eq:delta}. Take $A\equiv \overline{ \frac{1}{4}\mathbb{B}^n}\cap \{u\leq \frac{\delta }{32}\}$, and let $W$ be the closure in $\mb B^n$ of the set
$$Y\equiv \left\{\phi_t(x): x \in A\setminus\{0\}, t\in [0,t_+(x)[\right\}.$$
Since $W\setminus \{0\}$ is the closure of $Y$ in $\mb B^n\setminus \{0\}$, and since the (relative) closure of a positively invariant set is also positively invariant, clearly \ref{it:posinv} holds. We now prove that \ref{it:complete} holds with $\e=\tfrac{\delta }{128}$. In fact, it suffices to show that 
\[
\{-2\e<u<2\e\}\cap Y \subseteq \frac{1}{2} \overline{\mathbb{B}^n}.
\]
By contradiction, let $y \in \{-2\e<u<2\e\}\cap Y$ with $|y|> 1/2$. By definition of $Y$ there would then be $x\in A\setminus\{0\}$ with $|x| \le 1/4$ and $t \in \left]0,t_+(x)\right[$ such that $y =\phi_t(x) \notin {\frac{1}{2}\overline{\mathbb{B}^n}}$. Thus Corollary \ref{cor:tec2} yields $[t_1,t_2]\subset \left[0,t\right[$ fulfilling \eqref{eq:conti}-\eqref{eq:cauchyphi}. Using that $x \in A$ so that $u(x) \le \tfrac {\delta}{32}$, we deduce
$$-2\e < u(y) = u(\phi_t(x)) \le u(\phi_{t_2}(x)) \overset{\eqref{eq:cauchyuphi},\eqref{eq:cauchyphi}}{\leq} u(\phi_{t_1}(x)) - \frac{\delta }{16} \leq u(x) - \frac{\delta}{16} \leq - \frac{\delta }{32} = - 4 \e$$
which gives the desired contradiction.
\end{proof}	

The next lemma, which is the key technical tool of this section, gives a precise statement to the effect that the topology of sublevel sets can only change while crossing a critical value. For simplicity let us introduce the notation $W^b\equiv \{u\leq b\} \cap W$ and $K_b \equiv \{0\}\cap \{u=b\}$. Note that, by \eqref{eq:assum}, $K_b$ is non-empty only if $b=0$.
	
\begin{lemma}[Deformation lemma]\label{lemma:def}
Let $u\in C^1(\mb B^n)$ be as in \eqref{eq:assum}.  We assume that:
\begin{enumerate}
\item\label{it:propW} $W\subset \mb B^n$ is a closed neighborhood of 0 for which $W\setminus \{0\}$ is positively invariant for $\{\phi_t\}$;
\item\label{it:nocp} $\{a\leq u \leq b\}\cap W$ is compact and $0\not \in \{a<u<b\}$, where $a<b$.
\end{enumerate}
 Then $W^a$ is a deformation retract of $W^b \setminus K_b$.
\end{lemma}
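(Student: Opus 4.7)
The plan is to construct the deformation explicitly by pushing each point along the pseudo-gradient flow $\phi_t$ until its $u$-value first equals $a$. Concretely, for $x \in W^b \setminus K_b$ I define the stopping time
$$\tau(x) \equiv \inf\bigl\{t \in [0,t_+(x)) : u(\phi_t(x)) \leq a\bigr\},$$
with $\tau(x) \equiv 0$ whenever $u(x) \leq a$, and set $H(s,x) \equiv \phi_{s\tau(x)}(x)$. Lemma \ref{lemma:tec1} furnishes the a priori bound $\tau(x) \leq 4(u(x)-a)_+ < +\infty$ and gives $u(H(s,x)) \leq u(x) \leq b$, so $H$ takes values in $W^b$. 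Positive invariance keeps $H(s,x) \in W \setminus \{0\}$ while $s\tau(x) < t_+(x)$; the only remaining issue is $s=1$ when $\tau(x) = t_+(x)$. In that regime the trajectory sits in the compact set $\{a \leq u \leq b\} \cap W$ and has finite blow-up time, so $\phi_t(x) \to 0$, which forces $u(0) = a = 0$. Under the hypothesis $0 \notin \{a < u < b\}$ this can only occur when $b > 0$, whence $K_b = \emptyset$ and the continuous extension $H(1,x) \equiv 0$ remains inside $W^b \setminus K_b$.

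The heart of the proof is the continuity of $\tau$, from which continuity of $H$ follows readily. The non-problematic case is $\tau(x_0) < t_+(x_0)$: the strict decay $\tfrac{d}{dt} u(\phi_t(x_0)) \leq -\tfrac14$ of Lemma \ref{lemma:tec1} makes $u(\phi_{\tau(x_0)\pm \delta}(x_0))$ lie on opposite sides of $a$ with a $\delta/4$-margin, and continuous dependence of the ODE on the compact time interval $[0,\tau(x_0)+\delta]$ transports these inequalities to $x_j \to x_0$; when $u(x_0) = a$ one uses the linear bound $\tau(x_j) \leq 4(u(x_j)-a)_+ \to 0$. The delicate case is $\tau(x_0) = t_+(x_0)$. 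Here $\liminf_j \tau(x_j) \geq \tau(x_0)$ comes from continuous dependence on finite sub-intervals combined with lower semicontinuity of $t_+$. For the reverse inequality, if a subsequence satisfied $\tau(x_j) \geq \tau(x_0) + 2\eta$, then Lemma \ref{lemma:contragen} applied with $T_j \equiv \tau(x_0)+\eta < \tau(x_j) \leq t_+(x_j)$ would force $\phi_{T_j}(x_j) \to 0$, so $u(\phi_{T_j}(x_j)) \to 0 = a$; a further flow-step of length $\eta/2$ would then drop $u$ strictly below $a$ by Lemma \ref{lemma:tec1}, contradicting $T_j + \eta/2 < \tau(x_j)$. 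Continuity of $r \equiv H(1,\cdot)$ at such $x_0$ follows directly from Lemma \ref{lemma:contragen}.

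Once $H$ is continuous, the defining properties of a deformation retract are immediate: $H(0,\cdot) = \id$ by construction, $H(s,y) = y$ for every $y \in W^a$ since $\tau \equiv 0$ on $W^a$, and $H(1,x) \in W^a$ by definition of $\tau$. The main obstacle is therefore the continuity argument precisely at those $x_0$ whose trajectory itself reaches the critical point $0$ in the limit; Lemma \ref{lemma:contragen} was tailor-made for this situation and is the crucial ingredient that allows the contradiction argument above to close.
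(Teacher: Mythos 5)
Your construction is in substance the paper's own proof: you define the arrival time at the level $\{u=a\}$ (your $\tau$ is the paper's $t_a$), deform along the rescaled flow $\phi_{s\tau(x)}(x)$, and reduce everything to continuity of $\tau$ and of the flow map, with Lemma \ref{lemma:contragen} carrying the delicate case of trajectories ending at the origin. Two steps need repair, one minor and one genuine. The minor one: when $\tau(x)=t_+(x)$ you assert that compactness of $\{a\le u\le b\}\cap W$ together with finiteness of the blow-up time forces $\phi_t(x)\to 0$. Finiteness of $t_+(x)$ only yields $\liminf_{t\nearrow t_+(x)}|\phi_t(x)|=0$ a priori (the trajectory must leave every compact subset of $\mb B^n\setminus\{0\}$, but could in principle oscillate); to upgrade this to convergence you need either the Lipschitz estimate of Lemma \ref{lemma:tec2} away from the origin (showing the limit exists, and ruling out a nonzero limit by continuing the flow) or Lemma \ref{lemma:contragen} applied with the constant sequence $x_j\equiv x$ --- this is exactly the paper's Steps 1--2, and should be said.

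The genuine problem is in the key inequality $\limsup_j\tau(x_j)\le\tau(x_0)$ when $\tau(x_0)=t_+(x_0)$: you invoke Lemma \ref{lemma:contragen} with $T_j\equiv\tau(x_0)+\eta$, but that lemma requires $T_j\to T=t_+(x_0)$, and its conclusion is in general false for times converging to a value strictly larger than $T$. Indeed, once the limiting trajectory has reached the critical point, nearby trajectories may be ejected along unstable directions (think of the rescaled pseudo-gradient flow near a nondegenerate saddle), so there is no reason for $\phi_{\tau(x_0)+\eta}(x_j)$ to tend to $0$; this is precisely the saddle-point subtlety the lemma is calibrated to avoid. The fix is immediate: apply Lemma \ref{lemma:contragen} with the constant sequence $T_j\equiv\tau(x_0)$, which is admissible since $\tau(x_0)<\tau(x_0)+2\eta\le\tau(x_j)\le t_+(x_j)$ and $T_j\to T$; this gives $u(\phi_{\tau(x_0)}(x_j))\to u(0)=a$, and then your flow-step of length $\eta$ together with Lemma \ref{lemma:tec1} forces $u(\phi_{\tau(x_0)+\eta}(x_j))<a$ for large $j$, contradicting $\tau(x_j)\ge\tau(x_0)+2\eta$. (The paper obtains the same conclusion at time $t_a(x_0)$ by the double-limit argument proving \eqref{eq:cl2}.) With these two repairs your argument is correct and coincides with the paper's proof.
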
		
	
\begin{proof}
We split the proof into several steps.
\smallskip \\ 
\textbf{Step 1.} \textit{Let $x\in (\{a< u \leq b\} \cap W) \setminus K_b$. Then there is a unique arrival time $t_a(x)\in [0,t_+(x)]$ such that $\lim_{t\nearrow t_a(x)} u(\phi_t(x)) = a$.}
\newline \indent From \eqref{eq:cauchyuphi} we see that  $t\mapsto u(\phi_t(x))$ is strictly decreasing, and so $\ell\equiv \lim_{t\nearrow t_+(x)} u(\phi_t(x))$ exists. We only need to show that we cannot have $\ell>a$. 
Hence assume by contradiction that $\ell>a$, so that we could find $a<a'<\ell<b'<b$. Consider $K \equiv W \cap \{a' \leq u \leq b'\}$, which is a compact set not containing $0$ by \ref{it:nocp}. Then, if $t_1$ is such that $u(\phi_{t_1}(x)) \leq b'$, we have  $\phi_t(x) \in K$ for all $t_1 \le t < t_+(x)$. We then deduce from \eqref{eq:cauchygen} that
\[
|\phi_{s}(x)-\phi_{t}(x)|\leq \frac{s-t}{\min_K|\D u|}, \quad \text{whenever } t_1 \le t \le s <  t_+(x)
\]
Thus, since $t_+(x) < + \infty$ by Lemma \ref{lemma:tec1}, we obtain that $x_0 \equiv \lim_{t\nearrow t_+(x)} \phi_t(x)$ exists and belongs to the compact set $K$. Since $0 \notin K$, the flow can be continued at $x_0$, contradicting the definition of the maximal time of existence $t_+(x)$.
\smallskip \\
\textbf{Step 2.} \textit{$\phi_{t_a(x)}(x) \equiv \lim_{t\nearrow t_a(x)} \phi_t(x)$ exists for $x\in (\{a< u \leq b\} \cap W)\setminus K_b$.}

There are two cases to consider: writing $L \equiv \inf_{t\in [0, t_a(x)[} |\phi_t(x)|$, either $L>0$ or $L=0$. In the former case we set $K \equiv (\{a\leq u\leq b\}\cap W)\setminus B_L(0)$, which is a compact set not containing $0$ by \eqref{eq:assum}-\ref{it:nocp}, and use \eqref{eq:cauchygen} to find that for all  $0\leq t_1\leq t_2<t_a(x)$:
$$|\phi_{t_2}(x)- \phi_{t_1}(x)| \leq \frac{t_2-t_1}{\min_K|Du|},$$
so that $\lim_{t\nearrow t_a(x)} \phi_t(x)$ exists. On the other hand, if $L=0$ then $\lim_{t\nearrow t_a(x)}\phi_t(x) = 0$ by Lemma \ref{lemma:contragen} applied with a constant sequence of points.
\smallskip\\
\textbf{Step 3.} \textit{Setting $t_a(x)=0$ if $x\in \{u\leq a\}\cap W$, the function $x\mapsto t_a(x)$ is continuous in $W^b\setminus K_b$.}

Let us first note that, if $x_j\in (\{a<u\leq b\}\cap W)\setminus K_b$ is a sequence such that $x_j\to x_0$ and $u(x_j)\to a$, then by \eqref{eq:cauchyuphi} we have $$\tfrac{t_a(x_j)}{4} \leq u(x_j) - u(\phi_{t_a(x_j)}(x_j)) = u(x_j) - a \to 0.$$ Thus it suffices to prove continuity of $x\mapsto t_a(x)$ at points $x_0\in (\{a<u\leq b\}\cap W)\setminus K_b$.

Suppose first that $\phi_{t_a(x_0)}(x_0)\neq 0$. In this case we have, by \eqref{eq:pgvf} and \eqref{eq:flow},
$$\frac{\d}{\d t} u(\phi_t(x_0))\Big|_{t=t_a(x_0)} =- \frac{\langle \D u,X\rangle}{|X|^2}(\phi_{t_a(x_0)}(x_0))\leq - \frac 1 4,$$
thus the implicit function theorem shows that $t_a$, as the unique solution of $u(\phi_{t_a(x)}(x))=a$, is $C^1$ in a neighborhood of $x_0$. 

Suppose now that $\phi_{t_a(x_0)}(x_0) =0$. If $x\mapsto t_a(x)$ were not continuous at $x_0$ there would be a sequence $x_j\to x_0$ and $\e>0$ such that $|t_a(x_j)-t_a(x_0)|\geq \e>0$. Consider first the case $t_a(x_j) \le t_a(x_0) - \e$ for all $j$ (up to passing to an unrelabeled subsequence). We have
\begin{equation}\label{eq:strict}
t_a(x_j) \le t_a(x_0) - \e < t_+(x_0) \le \liminf_jt_+(x_j),
\end{equation}
by lower semicontinuity of $t_+$.\ By monotonicity of $t \mapsto u(\phi_t(x_j))$, we also have
\begin{equation}\label{eq:a}
u(\phi_{t_a(x_0)-\e}(x_j)) \le u(\phi_{t_a(x_j)}(x_j)) = a.
\end{equation}
Because of \eqref{eq:strict}, we can use continuity of $x \mapsto \phi_{t_a(x_0)-\e}(x)$ in a neighborhood of $x_0$, to deduce
\[
u(\phi_{t_a(x_0)-\e}(x_0)) = \lim_{j} u(\phi_{t_a(x_0)-\e}(x_j)) \overset{\eqref{eq:a}}{\le} a,
\]
which is in contradiction with the definition of $t_a(x_0)$. Consider now the case where $t_a(x_j) \ge t_a(x_0) + \e$ for all $j$, again up to unrelabeled subsequences.
We claim that 
\begin{equation}\label{eq:cl2}
	\lim_ju(\phi_{t_a(x_0)}(x_j)) = a.
\end{equation}
This would yield a quick contradiction, since then
\[                                                                                        
 a-u(\phi_{t_a(x_0)}(x_j)) = u(\phi_{t_a(x_j)}(x_j))- u(\phi_{t_a(x_0)}(x_j)) \overset{\eqref{eq:cauchyuphi}}{\le} -\frac{t_a(x_j)-t_a(x_0)}{4} \le -\frac{\eps}{4}
\] 
is false for $j$ large by \eqref{eq:cl2}. Let us show \eqref{eq:cl2}, again reasoning by contradiction. Since $t_a(x_j) \ge t_a(x_0) + \e$, by monotonicity we deduce that $u(\phi_{t_a(x_0)}(x_j)) \ge a$ for all $j$, and hence contradicting \eqref{eq:cl2} leads us to a $\delta > 0$ such that, for all $j$,
\begin{equation}\label{eq:contrcontr}
u(\phi_{t_a(x_0)}(x_j)) \ge a + \delta.
\end{equation}
But then, for all $k > 0$ and for all $j$,
\[
0 \le u(\phi_{t_a(x_0)- k}(x_j)) - u(\phi_{t_a(x_0)}(x_j)) \overset{\eqref{eq:contrcontr}}{\le} u(\phi_{t_a(x_0)- k}(x_j)) - a -\delta.
\]
By lower semicontinuity of $t_+$, we find that $x\mapsto \phi_{t_a(x_0)-k}(x)$ is continuous in a neighborhood of $x_0$, for each fixed $k > 0$. Thus, letting first $j \to \infty$ and then $k \to 0$, we reach the contradiction $0 \le - \delta$, and hence \eqref{eq:cl2} is proved. This concludes the proof of this step.

\smallskip
\textbf{Step 4.}\textit{ The map $h\colon [0,1]\times (W^b \setminus K_b)\to W^b \setminus K_b $ defined by
$$h(t,x)\equiv \begin{cases}
x & \text{if } x\in \{u\leq a\} \cap W = W^a,\\
\phi(t_a(x) t, x) & \text{if } x\in (\{a<u\leq b\} \cap W)\setminus K_b,
\end{cases}$$
for $t\in [0,1]$,  gives a deformation retract of $W^b \setminus K_b$ to $W^a$.}
Note that $h(t,W^b\setminus K_b)\subseteq W^b\setminus K_b$ due to the positive invariance \ref{it:propW} of $W\setminus\{0\}$.\ Obviously we have $h(0,\cdot) = \id_{W^b\setminus K_b}$, $h(1,W^b\setminus K_b) \subseteq W^a$ and $h(t,\cdot)|_{W^a} = \id_{W^a}$ for all $t$, so the only thing to check is that $h$ is continuous.


We want to check that $h$ is continuous at the point $(t_0,x_0) \in [0,1]\times (W^b\setminus K_b)$. There are the following cases to consider: 
\begin{enumerate}
	\item $u(x_0) < a$;\label{c:1}
	\item $u(x_0) = a$ and $x_0 \neq 0$;\label{c:2}
	\item $u(x_0) = a$, $x_0 = 0$;\label{c:3}
	\item $t_0 \in [0,1)$, $a < u(x_0)$; \label{c:4}
	\item $t_0 = 1$, $a < u(x_0)$, $\lim_{t\nearrow t_a(x_0)}\phi_t(x_0) \neq 0$;\label{c:5}
	\item $t_0 = 1$, $a < u(x_0)$, $\lim_{t\nearrow t_a(x_0)}\phi_t(x_0) = 0$.\label{c:6}
\end{enumerate}
Observe that the existence of the above limits is provided by Step 2.
Case \ref{c:1} is immediate, since in a neighborhood of $(t_0,x_0)$ we have $h(t,x) = x$. Cases \ref{c:2}-\ref{c:4}-\ref{c:5} are all treated analogously, and the proof rests on an application of Step 3 and on noticing that, under any of those assumptions, there is a neighborhood of $(t_0,x_0)$ on which $(s,x) \mapsto \phi_{st_a(x)}(x)$ is continuous. We are only left with Cases \ref{c:3}-\ref{c:6}, which are shown similarly. Consider first \ref{c:3}. If $h$ were not continuous at $(t_0,x_0)= (t_0,0)$, then there would exist $\eps > 0$ and sequences $(t_j,x_j) \to (t_0,0)$ such that
\begin{equation}\label{eq:contrafin1}
|h(t_j,x_j) - h(t_0,0)| = |\phi_{t_jt_a(x_j)}(x_j)| \ge \e > 0,\quad \forall j.
\end{equation}
Here we have used the fact that $x_j$ cannot fulfill $u(x_j) < a$ for infinitely many $j$, since otherwise \eqref{eq:contrafin1} is trivially false. Observe that $t_a(x_j) \to 0 = t_a(0)$ by Step 3. With this observation, we can immediately find a contradiction to \eqref{eq:contrafin1} through Lemma \ref{lemma:contragen}.
Case \ref{c:6} is almost identical:  were the conclusion false, we would have again $\eps > 0$ and sequences $(t_j,x_j) \to (1,x_0)$ such that
\begin{equation}\label{eq:contrafin2}
	|h(t_j,x_j) - h(1,x_0)| = |\phi_{t_jt_a(x_j)}(x_j) - \phi_{t_a(x_0)}(x_0)| = |\phi_{t_jt_a(x_j)}(x_j)| \ge \e > 0.
\end{equation}
Observing that, again by Step 3, $t_jt_a(x_j) \to t_a(x_0)$ and by assumption $\phi_{t_a(x_0)}(x_0) = 0$, we are in position to apply Lemma \ref{lemma:contragen} to readily discard \eqref{eq:contrafin2}. Thus, $h$ is continuous, and we conclude the proof of this step and of this lemma.
\end{proof}

\subsection{Properties of the critical groups}\label{sub:crit}

As a consequence of the machinery we developed so far, we can easily prove the following:

\begin{proposition}\label{prop:nicecritgroup}
Let $u\in C^1(\mb B^n)$ be as in \eqref{eq:assum} and $\e,W$ satisfy \ref{it:posinv}-\ref{it:complete}of Lemma \ref{lemma:ngbd}. Then,
$$C_{k}(u, 0) = H_k(W^\e, W^{-\e})$$
\end{proposition}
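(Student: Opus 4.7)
The plan is to reduce $C_k(u,0)$ to $H_k(W^\e, W^{-\e})$ by two applications of the Deformation Lemma (Lemma \ref{lemma:def}) glued together via Lemma \ref{lemma:ret}. First, since Lemma \ref{lemma:ngbd}\ref{it:posinv} asserts that $W$ is a closed neighborhood of $0$ in $\mb B^n$, the excision argument recorded just after Definition \ref{def:crit} allows us to take $U=W$, so that, using $u(0)=0$,
\[
C_k(u,0) = H_k(W^0, W^0\setminus\{0\}).
\]

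Next, I apply Lemma \ref{lemma:def} with $a=-\e$ and $b=0$. Hypothesis \ref{it:propW} is Lemma \ref{lemma:ngbd}\ref{it:posinv}; for hypothesis \ref{it:nocp}, note that $\{-\e\le u\le 0\}\cap W$ is a closed subset of the compact set $\{-\e\le u\le \e\}\cap W\subseteq \tfrac12\overline{\mb B^n}$ supplied by Lemma \ref{lemma:ngbd}\ref{it:complete}, and $0\notin\{-\e<u<0\}$ because $u(0)=0$. Since $u(0)=0$ gives $K_0=\{0\}$, the Deformation Lemma shows that $W^{-\e}$ is a deformation retract of $W^0\setminus\{0\}$. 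Applying Lemma \ref{lemma:ret}\ref{it:def2} with $X=W^0$, $A=W^0\setminus\{0\}$, $A'=W^{-\e}$ yields
\[
H_k(W^0, W^0\setminus\{0\}) \cong H_k(W^0, W^{-\e}).
\]

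Then I apply Lemma \ref{lemma:def} once more, this time with $a=0$ and $b=\e$; the same verifications go through, and now $K_\e=\{0\}\cap\{u=\e\}=\emptyset$, so $W^0$ is a deformation retract of $W^\e$. Lemma \ref{lemma:ret}\ref{it:def1} with $X=W^\e$, $X'=W^0$, $A=W^{-\e}$ gives
\[
H_k(W^\e, W^{-\e}) \cong H_k(W^0, W^{-\e}).
\]
Chaining the three isomorphisms produces the desired identity $C_k(u,0)\cong H_k(W^\e,W^{-\e})$. The potentially delicate step—the existence of the pseudo-gradient deformation retraction near the possibly singular critical point at $0$—has already been carried out in Lemmas \ref{lemma:ngbd}--\ref{lemma:def}, so no additional obstacle arises here.
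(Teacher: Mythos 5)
Your proposal is correct and follows essentially the same route as the paper: excise to compute $C_k(u,0)=H_k(W^0,W^0\setminus\{0\})$, then apply the Deformation Lemma \ref{lemma:def} twice (with $a=-\e,\,b=0$ giving $K_0=\{0\}$, and with $a=0,\,b=\e$ giving $K_\e=\emptyset$) combined with Lemma \ref{lemma:ret}\ref{it:def2} and \ref{it:def1} respectively. Your write-up merely makes explicit the hypothesis checks that the paper leaves implicit, and these are verified correctly.
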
	
	
\begin{proof}
By the excision property of the relative homology we can compute the critical groups with $U=W$, i.e.\ $C_k(u,0)\cong H_k(W^0, W^0\setminus \{0\})$.
By Lemmas \ref{lemma:def} and  \ref{lemma:ret}\ref{it:def1}  we have $H_k(W^\e,W^{-\e}) \cong H_k(W^0,W^{-\e})$, while  Lemmas \ref{lemma:def} and  \ref{lemma:ret}\ref{it:def2}  give $H_k(W^0,W^{-\e})\cong H_k(W^0, W^0\setminus \{0\})$.
\end{proof}
	
\begin{corollary}[Constancy of the critical groups in $C^1$]\label{cor:continuity}
Let $u_1, u_2\in C^1(\mb B^n)$ be two functions satisfying \eqref{eq:assum}. There is $\gamma>0$, depending only on $u_1$, such that 
$$\|u_1-u_2\|_{C^1(\mb B^n)} \leq \gamma\quad \implies \quad C_k(u_1, 0)= C_k(u_2,0).$$
\end{corollary}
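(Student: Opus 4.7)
The plan is to apply Proposition \ref{prop:nicecritgroup} to both $u_1$ and $u_2$ and to compare the resulting relative homologies via a sandwich argument enabled by the $C^0$-closeness of the two functions.

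Let $\e>0$ and the closed neighborhood $W\subset \mb B^n$ of $0$ be given by applying Lemma \ref{lemma:ngbd} to $u_1$, with associated pseudo-gradient flow $\phi^1$. By inspection of the proof of that lemma, $\e$ is comparable to $\delta := \min_{\overline{B_{1/2}}\setminus B_{1/4}}|\D u_1|>0$. Fix $\gamma\in (0, \min(\e/8,\delta/8))$. For any $u_2$ satisfying \eqref{eq:assum} with $\|u_1-u_2\|_{C^1(\mb B^n)}\leq \gamma$, one has $\min_{\overline{B_{1/2}}\setminus B_{1/4}}|\D u_2|\geq \delta/2$, so the same construction applied to $u_2$ yields parameters $\e_2>0$ bounded below uniformly in $u_2$ and a neighborhood $W_2$ of $0$. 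Proposition \ref{prop:nicecritgroup} then gives
\[
C_k(u_1,0)=H_k(W\cap\{u_1\leq \e\},\, W\cap\{u_1\leq -\e\}),\qquad C_k(u_2,0)=H_k(W_2\cap\{u_2\leq \e_2\},\, W_2\cap\{u_2\leq -\e_2\}).
\]

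By the independence of the critical group on the ambient neighborhood (noted after Definition \ref{def:crit}), combined with the chain of deformation-retract isomorphisms from the proof of Proposition \ref{prop:nicecritgroup}, both critical groups equal $H_k(U\cap\{u_j\leq 0\},\, U\cap\{u_j\leq 0\}\setminus\{0\})$ for any neighborhood $U$ of $0$. Taking $U$ to be an open neighborhood of $0$ contained in $\tp{int}(W)\cap\tp{int}(W_2)$, the claim reduces to showing $H_k(U\cap\{u_1\leq 0\},U\cap\{u_1\leq 0\}\setminus\{0\})\cong H_k(U\cap\{u_2\leq 0\},U\cap\{u_2\leq 0\}\setminus\{0\})$. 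This is established via the flow $\phi^1$: using the $C^0$-closeness, one verifies (with stopping times given by the first time $u_2(\phi^1_t(x))\leq 0$, respectively $u_1(\phi^1_t(x))\leq -\gamma$) that $U\cap\{u_2\leq 0\}$ is a deformation retract of $U\cap\{u_1\leq \gamma\}$ and that $U\cap\{u_1\leq -\gamma\}$ is a deformation retract of $U\cap\{u_2\leq 0\}\setminus\{0\}$. Combined with Lemma \ref{lemma:ret} and Proposition \ref{prop:nicecritgroup} applied to $u_1$ with parameter $\gamma$ in place of $\e$, the chain
\[
C_k(u_1,0)=H_k(U\cap\{u_1\leq \gamma\},\,U\cap\{u_1\leq -\gamma\})\cong H_k(U\cap\{u_2\leq 0\},\,U\cap\{u_1\leq -\gamma\})\cong C_k(u_2,0)
\]
yields the desired equality.

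The main technical obstacle is the continuity of the stopping times that define these deformation retracts. Along $\phi^1$ the function $u_1$ strictly decreases by \eqref{eq:cauchyuphi}, but $u_2$ may slip upward by at most $\gamma$, so the crossing of $\{u_2=0\}$ by the flow need not be transverse in the region $\{|\D u_1|<2\gamma\}$. On the complement $\{|\D u_1|\geq 4\gamma\}$, the pseudo-gradient inequalities \eqref{eq:pgvf} give the computation
\[
\frac{\d}{\d t}u_2(\phi^1_t(x)) = -\frac{\langle \D u_2, X\rangle}{|X|^2} \leq -\frac{1}{4}+\frac{\gamma}{2|\D u_1|} \leq -\frac{1}{8},
\]
which secures transversality there; in the thin region around the origin, where $|\D u_1|<4\gamma$, an argument in the spirit of Step 3 of the proof of Lemma \ref{lemma:def} is needed to propagate continuity through possibly tangential crossings, relying on the fact that this region shrinks to $\{0\}$ as $\gamma\to 0$ and that $0$ is a critical point of both $u_1$ and $u_2$.
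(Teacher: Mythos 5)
The core of your argument --- comparing sublevel sets of $u_1$ and $u_2$ by flowing along the pseudo-gradient flow $\phi^1$ of $u_1$ --- has a genuine gap, and it is more than the continuity-of-stopping-times issue you flag at the end. First, the claimed retraction of $U\cap\{u_2\le 0\}\setminus\{0\}$ onto $U\cap\{u_1\le-\gamma\}$ need not even be well defined: since $\|u_1-u_2\|_{C^0}\le\gamma$, the set $\{u_2\le 0\}$ may contain points with $0<u_1\le\gamma$, and a $\phi^1$-trajectory through such a point can converge to the origin with $u_1(\phi^1_t(x))$ decreasing to $0$, hence never reaching the level $-\gamma$; its limit is $0$, which you have removed from the pair. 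Structurally, you are asking a Morse deformation for $u_1$ to push a set containing points strictly above the critical level $\{u_1=0\}$ down past that level, which is exactly what the flow cannot do (note that in Lemma \ref{lemma:def} the critical value is always an endpoint of $[a,b]$, never interior). Second, even where the stopping times exist, your homotopies are not maps into the relevant sets: along $\phi^1$ one only has $\tfrac{\d}{\d t}u_2(\phi^1_t)\le -\tfrac14+\tfrac{\gamma}{2|\D u_1|}$ by \eqref{eq:pgvf}, so wherever $|\D u_1|<2\gamma$ --- an open region surrounding the origin, by \eqref{eq:assum} --- the function $u_2$ may strictly increase along the flow, and points of $\{u_2\le 0\}$ are pushed out of $\{u_2\le 0\}$ at intermediate times; a deformation retract requires $h(t,\cdot)$ to map the ambient space into itself. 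Third, $\phi^1$ does not preserve an arbitrary neighborhood $U\subset \tp{int}(W)\cap\tp{int}(W_2)$, and the identification $C_k(u_1,0)=H_k(U\cap\{u_1\le\gamma\},U\cap\{u_1\le-\gamma\})$ is not covered by the excision remark after Definition \ref{def:crit} (which applies only to the pair at the critical level): Proposition \ref{prop:nicecritgroup} needs the flow-invariant neighborhood $W$ built in Lemma \ref{lemma:ngbd}, so it cannot be invoked with $U$ and the parameter $\gamma$ without redoing the invariance and compactness checks.

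The paper takes a different and simpler route that avoids comparing sublevel sets of $u_1$ and $u_2$ by a flow altogether: with $W,\e$ from Lemma \ref{lemma:ngbd} for $u_1$ and a ball $B_r(0)\subset\{-\e/2\le u_1\le\e/2\}\cap W$, one interpolates $\widetilde u_2\equiv u_1+\eta(u_2-u_1)$ with a cutoff $\eta$ supported in $B_r(0)$ and equal to $1$ on $B_{r/2}(0)$. For $\gamma$ small, $\widetilde u_2$ still satisfies \eqref{eq:assum} (via a gradient lower bound on the annulus), its sublevel sets at levels $\pm\e$ coincide with those of $u_1$, and $W\setminus\{0\}$ remains positively invariant for a pseudo-gradient flow of $\widetilde u_2$ agreeing with that of $u_1$ outside $B_r(0)$; hence Proposition \ref{prop:nicecritgroup} gives $C_k(\widetilde u_2,0)=C_k(u_1,0)$, while $C_k(\widetilde u_2,0)=C_k(u_2,0)$ because $\widetilde u_2=u_2$ on $B_{r/2}(0)$ and critical groups are local. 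If you want to rescue your sandwich argument you would have to handle the non-monotonicity of $u_2$ along $\phi^1$ near the origin and the stuck trajectories described above; the cutoff trick is precisely a device for never having to do so.
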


\begin{proof}
Let $W,\e$ be as in Lemma \ref{lemma:ngbd}, applied with $u=u_1$, and take $r>0$ so that 
\begin{equation}
\label{eq:chooser}
B_r(0)\subseteq \{-\tfrac \e 2\leq u \leq \tfrac \e 2\}\cap W.
\end{equation}
Take a cutoff $\eta\in C^\infty_c(B_r(0))$  with $0\leq \eta \leq 1$, $\eta=1$ in $B_{r/2}(0)$ and $|\D \eta|\leq Cr^{-1}$, for a dimensional constant $C$, and set 
$$\widetilde u_2 \equiv u_1 + \eta (u_2-u_1),$$
thus $\widetilde u_2= u_1$ outside $B_r(0)$. Since $|\widetilde u_2 - u_1| \leq \gamma$, if we choose $\gamma < \frac \e 2$ then by \eqref{eq:chooser} we have 
\begin{equation}
\label{eq:samesublevelsets}
\{u_1\leq \pm \e\}=\{\widetilde u_2\leq\pm \e\}
\end{equation}
and so $\{-\e \leq \widetilde u_2 \leq \e\}\cap W = \{-\e \leq u_1 \leq \e\}\cap W$ is a compact set. 
As usual let us take $0<\delta \equiv \inf_{B_r(0)\setminus B_{r/2}(0)} |\D u|$.  We then estimate, in $B_r(0)\setminus B_{r/2}(0)$, 
$$
|\D \widetilde u_2| \geq |\D u_1| - \eta |\D u_1 - \D u_2| - |\D \eta| |u_1 -u_2|\geq \delta -(1+Cr^{-1})\gamma \geq  \frac \delta 2,
$$
provided that we also choose $\gamma \leq \frac{\delta}{2(1+Cr^{-1})}$. Since $\widetilde u_2=u_2$ in $B_{r/2}(0)$ and $\widetilde u_2=u_1$ outside $B_r(0)$, we see that $\widetilde u_2$ still satisfies \eqref{eq:assum}. Moreover $W \setminus \{0\}$ is also positively invariant for a suitable pseudo-gradient flow of $\widetilde u_2$ which matches the one of $u_1$ outside $B_r(0)$, since $B_r(0)$ is contained in the interior of $W$. Thus Proposition \ref{prop:nicecritgroup} is applicable also to $\widetilde u_2$, and we infer:
\[
C_k(\widetilde u_2,0) = H_k(W\cap \{\widetilde u_2 \le \eps\},W\cap \{\widetilde u_2 \le -\eps\}) \overset{\eqref{eq:samesublevelsets}}{=} H_k(W\cap \{u_1 \le \eps\},W\cap \{u_1 \le -\eps\}) = C_k(u_1,0).
\]
In addition, since $\widetilde u_2 = u_2$ in $B_{\frac{r}{2}}(0)$, we can exploit the independence of the critical groups on the choice of neighborhood, see Definition \ref{def:crit}, to write
\begin{align*}
C_k(u_2,0) &= H_k(B_\frac{r}{2}(0)\cap \{u_2 \le 0\},B_\frac{r}{2}(0)\cap \{u_2 \le 0\}\setminus \{0\}) \\
&= H_k(B_\frac{r}{2}(0)\cap \{\widetilde u_2 \le 0\},B_\frac{r}{2}(0)\cap \{\widetilde u_2 \le 0\}\setminus \{0\}) = C_k(\widetilde u_2,0),
\end{align*}
and conclude the proof.
\end{proof}

We conclude this section computing the critical groups of some simple but important examples:

\begin{proposition}[Examples of critical groups]\label{prop:ex}
Let $u\in C^1(\mb B^n)$.
\begin{enumerate}
\item\label{it:min} If $x_0$ is an isolated local minimum of $u$ then $C_k(u,x_0) = \delta_{k,0} \Z$.
\item\label{it:nomin} If $x_0$ is an isolated critical point which is not a local minimum of $u$ then $C_0(u,x_0) = \{0\}$.
\item\label{it:quad} For non-singular $A\in \Sym(n)$  and $q_A(x) \equiv \tfrac 1 2\langle Ax,x\rangle$ we have $C_k(q_A,0) = \delta_{k,\ind(A)} \Z$.
\item\label{it:morse} If $x_0$ is an isolated critical point of $u$ and $u$ is twice differentiable at $x_0$ with $A\equiv \D^2 u(x_0)$ non-singular, then $C_k(u,x_0) = \delta_{k,\ind(A)} \Z.$
\end{enumerate}
\end{proposition}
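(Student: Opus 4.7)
I propose to treat (i) and (iii) by direct computation, (iv) by a blow-up reduction to (iii) through Corollary~\ref{cor:continuity}, and (ii) by a path-connectedness argument using the forward pseudo-gradient flow. Part (ii) will be the main obstacle, since it requires producing explicit paths from $0$ to the negative sublevel set of $u$ despite $u$ being merely $C^1$. Part (i) is immediate: if $x_0$ is an isolated local minimum, then $u>u(x_0)$ on a punctured neighborhood $U\setminus\{x_0\}$, so $\{u\leq u(x_0)\}\cap U=\{x_0\}$ and $C_k(u,x_0)=H_k(\{x_0\},\emptyset)=\delta_{k,0}\Z$ by the dimension axiom.

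For (iii), a linear change of coordinates combined with Sylvester's law of inertia and a rescaling of each axis reduces me to $q_A(x)=\tfrac12(|x^+|^2-|x^-|^2)$ with $x=(x^-,x^+)\in\R^k\times\R^{n-k}$ and $k=\ind(A)$. Using the independence of the critical groups from the choice of neighborhood, I take $U=\overline{\mb B^n}$ and set $X=\{q_A\leq 0\}\cap \overline{\mb B^n}$, $A_0=X\setminus\{0\}$. The homotopy $(s,(x^-,x^+))\mapsto (x^-,(1-s)x^+)$ deformation retracts $X$ onto the contractible disk $\overline{\mb B^k}\times\{0\}$, and when $k\geq 1$ the homotopy $(s,(x^-,x^+))\mapsto((1-s)x^-+s\,x^-/|x^-|,(1-s)x^+)$ is well-defined on $A_0$ (where $x^-\neq 0$, since $|x^-|\geq|x^+|$) and, as one checks by direct norm estimates, deformation retracts $A_0$ onto $\mb S^{k-1}\times\{0\}$. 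The long exact sequence of $(X,A_0)$ together with $X$ contractible then yields $C_j(q_A,0)\cong\tilde H_{j-1}(\mb S^{k-1})=\delta_{j,k}\Z$; the extremal cases $k=0$ and $k=n$ reduce respectively to (i) and to Lemma~\ref{lemma:computationhom}.

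For (iv), I translate so that $x_0=0$, $u(0)=0$, $\D u(0)=0$, and consider the blow-ups $u_\lambda(x)\equiv u(\lambda x)/\lambda^2$. Twice differentiability of $u$ at $0$ is precisely the assertion that $u_\lambda\to q_A$ in $C^1(\mb B^n)$ as $\lambda\to 0$, and for $\lambda$ small enough $u_\lambda$ satisfies \eqref{eq:assum} because $0$ is isolated in the critical set of $u$. Since the rescaling $x\mapsto\lambda x$ is a pair-homeomorphism of the data computing $C_k(u_\lambda,0)$ onto the data computing $C_k(u,0)$, we have $C_k(u_\lambda,0)=C_k(u,0)$; combining this with Corollary~\ref{cor:continuity} applied to $u_1=q_A$, $u_2=u_\lambda$, and with (iii), I conclude $C_k(u,0)=C_k(q_A,0)=\delta_{k,\ind(A)}\Z$.

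The remaining part (ii) is the crux. By Proposition~\ref{prop:nicecritgroup} it suffices to show $H_0(W^\e,W^{-\e})=0$, i.e.\ that every path component of $W^\e$ contains a point of $W^{-\e}$. Since $W^\e$ is a closed neighborhood of $0$, some $B_\rho(0)\subseteq W^\e$, and the failure of $0$ to be a local minimum yields $y\in B_\rho(0)$ with $u(y)<0$; the straight segment $[0,y]\subset B_\rho(0)$, concatenated with the forward trajectory from $y$ (which reaches $\{u=-\e\}$ in finite time by Step~1 of Lemma~\ref{lemma:def} applied with $a=-\e$, $b=u(y)<0$), produces a path in $W^0\subseteq W^\e$ from $0$ to a point of $W^{-\e}$. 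For a general $y'\in W^\e\setminus W^{-\e}$ with $y'\neq 0$, iterated applications of Step~1 (first on $(0,u(y'))$ when $u(y')>0$, and then on $(-\e,0)$ starting from the resulting point of $\{u=0\}$) show that the forward trajectory from $y'$ either terminates at $0$, in which case $y'$ is path-connected to $0$ and hence to $W^{-\e}$ by the previous construction, or it reaches $\{u=-\e\}$ in finite time directly; either way $y'$ lies in the same path component of $W^\e$ as a point of $W^{-\e}$, and (ii) follows.
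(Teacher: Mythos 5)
Your proposal is correct and follows essentially the same strategy as the paper: (i) via the dimension axiom, (iii) by diagonalizing and linearly retracting onto the negative eigenspace, (iv) by blow-up combined with Corollary \ref{cor:continuity}, and (ii) by showing, through pseudo-gradient trajectories from Lemma \ref{lemma:def}, that every path component of the upper sublevel set in the $W$-machinery meets the lower one. The differences are only cosmetic: in (iii) you pass through $\mb S^{k-1}$ and the long exact sequence where the paper quotes Lemma \ref{lemma:computationhom} for the pair $(\mb B^k,\mb B^k\setminus\{0\})$ directly, and in (ii) you flow all the way down to $W^{-\e}$ (correctly handling trajectories that terminate at the origin) where the paper first retracts $W^\e$ onto $W^0$ and then only connects points to $W^0\setminus\{0\}$; also note the trivial slips that $\overline{\mb B^n}\not\subset\mb B^n$ (take a smaller closed ball) and that your segment-plus-trajectory path lies in $W^\e$, not necessarily in $W^0$.
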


\begin{proof}
Since the calculation of the critical groups is local we can suppose without loss of generality that $x_0=0$ and, by adding a constant to $u$,  that  \eqref{eq:assum} holds. 

Observe that \ref{it:min} is a direct consequence of the dimension axiom \ref{it:dim}. 

For \ref{it:nomin} we can  apply Lemma \ref{lemma:ngbd} to find $\e>0$ and $W$  such that 
$$C_0(u,0)=H_0(W^0, W^{0}\setminus \{0\}) = H_0(W^\e,W^0\setminus \{0\}),$$
where the last equality follows from Lemmas \ref{lemma:ret}\ref{it:def1} and \ref{lemma:def}, since there is a deformation retract $h\colon [0,1]\times W^\e \to W^\e$ which deforms $W^\e$ to $W^0$.  To see that $H_0(W^\e,W^0\setminus \{0\})=\{0\}$ it suffices to note that every point $x\in W^\e$ can be connected to a point in $W^0\setminus \{0\}$ through a path in $W^\e$: indeed, $h(\cdot,x)\colon [0,1]\to W^\eps$ is a path between $x$ and a point in $W^0$; moreover $0\in \tp{int}(W)$, not being a local minimum, can itself be connected to points in $W^0\setminus \{0\}$ through paths in $W^\e$.

For \ref{it:quad}, by changing coordinates we can suppose that $A=\tp{diag}(-1,	\dots,-1,1\dots, 1)$ so let us write $\ell= \ind(A)$, $x_-=(x_1,\dots, x_\ell,0\dots, 0)$ and $x_+=(0,\dots, 0, x_{\ell+1},\dots, x_n)$, thus $q_A(x) =\tfrac 1 2 ( |x_+|^2 - |x_-|^2)$. By considering the function $h\colon [0,1]\times \mb B^n\to \mb B^n, (t,x)\mapsto  x_- +(1-t) x_+$ we see that $\{x_+=0\}\cap \mb B^n$ is a deformation retract of $\{q_A\leq 0\}\cap \mb B^n$, and that $\{x_+=0\}\cap \mb B^n\setminus \{0\}$ is a deformation retract of $\{q_A\leq 0\}\cap \mb B^n\setminus \{0\}$. Thus, applying Lemmas \ref{lemma:ret} and \ref{lemma:computationhom}, we have 
\begin{align*}
H_k(\{q_A\leq 0\}\cap \mb B^n, \{q_A\leq 0\}\cap \mb B^n\setminus \{0\})& 
 \cong H_k(\{x_+=0\} \cap \mb B^n, \{x_+=0\}\cap \mb B^n\setminus \{0\}) \\
 & \cong H_k(\mb B^\ell, \mb B^\ell\setminus \{0\})\cong \delta_{k,\ell} \Z;
\end{align*}
note that by the dimension axiom \ref{it:dim} this calculation holds even when $\ell=0$.

For \ref{it:morse}, the assumption that $0$ is a point of twice differentiability of $u$ guarantees that the sequence of rescalings $u_r(x) \equiv u(r x)/r^2$ converges to $q_A$ in $C^1(\mb B^n)$, as $r\to 0$.  Since $u_r$ still satisfy \eqref{eq:assum}, by \ref{it:quad} and Corollary \ref{cor:continuity} we have $C_k(u_r,x_0) = \delta_{k,\ind(A)} \Z$ for all $r$ small enough. But clearly $C_k(u_r, 0)$ is independent of $r$, so the claim follows.
\end{proof}

\section{Proofs of the main results}

Let $\Omega\subset \R^n$ be a connected open set.  We begin with the following:

\begin{theorem}[Constancy of the critical groups]\label{thm:constcritgroup}
Let $u\in C^1(\Omega)$ be such that $\D u$ is locally injective. For $x_0\in \Omega$, let 
$$u_{x_0}(x) \equiv u(x)- u(x_0) - \langle \D u(x_0), x-x_0\rangle.$$ Then
$ C_k(u_{x_0}, x_0) $ is independent of $x_0\in \Omega$. 
\end{theorem}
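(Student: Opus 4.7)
The plan is to show that the map $x_0 \mapsto C_k(u_{x_0}, x_0)$ is locally constant on $\Omega$; connectedness of $\Omega$ then yields the claim. Observe first that $\D u_{x_0}(x) = \D u(x) - \D u(x_0)$, so local injectivity of $\D u$ guarantees $x_0$ is an isolated critical point of $u_{x_0}$, with $u_{x_0}(x_0) = 0$; hence $C_k(u_{x_0}, x_0)$ is well defined.

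Fix $x_0 \in \Omega$, choose $r > 0$ so that $\overline{B_{2r}(x_0)} \subset \Omega$ and $\D u$ is injective on $\overline{B_{2r}(x_0)}$, and for each $x_1 \in \overline{B_r(x_0)}$ introduce the rescaled, translated function $\tilde u_{x_1}(y) \equiv u_{x_1}(x_1 + r y)$ for $y \in \mb B^n$. Since $\overline{B_r(x_1)} \subseteq \overline{B_{2r}(x_0)}$, the identity $\nabla_y \tilde u_{x_1}(y) = r[\D u(x_1 + ry) - \D u(x_1)]$ combined with the injectivity of $\D u$ shows that $0$ is the unique critical point of $\tilde u_{x_1}$ in $\mb B^n$, i.e.\ \eqref{eq:assum} holds. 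The affine map $y \mapsto x_1 + ry$ is a homeomorphism of $\mb B^n$ onto $\overline{B_r(x_1)}$ sending $0$ to $x_1$ and sublevel sets of $\tilde u_{x_1}$ to those of $u_{x_1}$, so functoriality of singular homology gives $C_k(u_{x_1}, x_1) = C_k(\tilde u_{x_1}, 0)$.

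Next I would verify that $\tilde u_{x_1} \to \tilde u_{x_0}$ in $C^1(\mb B^n)$ as $x_1 \to x_0$. Indeed,
\begin{equation*}
\tilde u_{x_1}(y) - \tilde u_{x_0}(y) = [u(x_1 + ry) - u(x_0 + ry)] - [u(x_1) - u(x_0)] - r \langle \D u(x_1) - \D u(x_0), y\rangle,
\end{equation*}
and differentiating in $y$ yields an analogous expression involving only $\D u$; both terms tend to zero uniformly in $y \in \mb B^n$ by uniform continuity of $u$ and $\D u$ on the compact set $\overline{B_{2r}(x_0)}$. Applying Corollary \ref{cor:continuity} with $u_1 = \tilde u_{x_0}$ then gives $C_k(\tilde u_{x_1}, 0) = C_k(\tilde u_{x_0}, 0)$, hence $C_k(u_{x_1}, x_1) = C_k(u_{x_0}, x_0)$, for all $x_1$ sufficiently close to $x_0$.

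I do not foresee a substantial obstacle: the essential point is to arrange the translation and rescaling so that Corollary \ref{cor:continuity}, which is stated on the fixed domain $\mb B^n$, becomes applicable, and the injectivity of $\D u$ on the fixed ball $\overline{B_{2r}(x_0)}$ is precisely what ensures \eqref{eq:assum} holds uniformly for all the perturbations $\tilde u_{x_1}$ with $x_1 \in B_r(x_0)$.
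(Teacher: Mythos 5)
Your proposal is correct and follows essentially the same route as the paper's proof: translate (and harmlessly rescale) so that all the functions $u_{x_1}$ live on the fixed unit ball with \eqref{eq:assum} holding thanks to local injectivity of $\D u$, verify $C^1$ convergence as $x_1\to x_0$ by uniform continuity of $u$ and $\D u$, and conclude local constancy via Corollary \ref{cor:continuity} together with the fact that the affine change of variables is a homeomorphism of pairs, hence preserves the critical groups; connectedness of $\Omega$ finishes the argument exactly as in the paper.
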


\begin{proof}
Fix an arbitrary $x_0\in \Omega$; it suffices to show that $y\mapsto C_k(u_y,y)$ is constant in a neighborhood of $x_0$. By changing coordinates we can suppose that $x_0=0$ and that $\D u$ is injective in $4\mb B^n$. Let us write 
$$\widetilde u_y(x) \equiv u_y(y+ x) = u(y+x) - u(y) - \langle \D u(y),x\rangle,$$
so that $\widetilde u_y$ satisfies \eqref{eq:assum} for all $y\in \mb B^n$.
If $\omega$ is a modulus of continuity for $\D u$ in $4\mb B^n$ and $L\equiv \sup_{4\mb B^n} |\D u|$, then it is easy to see that
 $$\sup_{x\in \mb B^n} \left[|\widetilde u_y-\widetilde u_0|(x)+|\D \widetilde u_y- \D \widetilde u_0|(x) \right]\leq 2L|y|  + 3\omega(|y|),$$
 and the right-hand side vanishes as $|y|\to 0$.
Thus there is a small $\delta>0$, depending only on $\widetilde u_0$, such that for $|y|\leq \delta$ we can apply Corollary \ref{cor:continuity} to deduce that $C_k(\widetilde u_y,0)=C_k(\widetilde u_0,0)$.
Since the set $ \{u_y\leq 0\}$ is just a translation of  $\{\widetilde u_y\leq 0\}$,  we infer that the groups $C_k(u_y,y)$ are independent of $y$ whenever $|y|\leq \delta$. 
\end{proof}

As a consequence of Theorem \ref{thm:constcritgroup} and the results of Section \ref{sec:morse}, we obtain a new proof of a theorem due to Ball \cite{Ball1980a}.
It is easy to see that if $\Omega$ is convex and if $u\in C^1(\Omega)$ is a strictly convex function then its gradient is injective. In fact,  the converse is also true, in a strong form: if $\D u$ is locally injective and if $u$ admits a locally supporting hyperplane at a single point (condition \eqref{eq:suphyp} below) then $u$ is strictly convex:

\begin{corollary}[Ball \cite{Ball1980a}]\label{cor:ball}
Assume that $\Omega$ is convex and let $u\in C^1(\Omega)$ be such that $\D u$ is locally injective. If there is $x_0\in \Omega$ such that
\begin{equation}
\label{eq:suphyp}
u_{x_0}\geq 0 \text{ in a neighborhood of } x_0, 
\end{equation}
then $u$ is strictly convex. 
\end{corollary}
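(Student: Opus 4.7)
The plan is to use the constancy of the critical groups (Theorem \ref{thm:constcritgroup}) to promote the single local-support hypothesis \eqref{eq:suphyp} to a supporting hyperplane at every point of $\Omega$, then upgrade local support to global convexity, and finally use local injectivity of $\D u$ to rule out affine pieces and conclude strict convexity.

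First I observe that local injectivity of $\D u$ makes every $y\in\Omega$ an isolated critical point of $u_y$, since $\D u_y(x)=\D u(x)-\D u(y)$. In particular the hypothesis \eqref{eq:suphyp} says that $x_0$ is an isolated local minimum of $u_{x_0}$, so Proposition \ref{prop:ex}\ref{it:min} gives $C_k(u_{x_0},x_0)=\delta_{k,0}\Z$. Applying Theorem \ref{thm:constcritgroup} on the connected open set $\Omega$, the same identity holds with $x_0$ replaced by any $y\in \Omega$; the contrapositive of Proposition \ref{prop:ex}\ref{it:nomin} then implies that each such $y$ is a local minimum of $u_y$, i.e.\ $u$ admits a supporting hyperplane in a neighborhood of every point.

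Next I would upgrade this to global convexity of $u$. Fix $a,b\in\Omega$ and set $g(t)\equiv u((1-t)a+tb)-(1-t)u(a)-tu(b)$ on $[0,1]$. The local supporting inequality at $y_{t^*}\equiv(1-t^*)a+t^*b$ yields $g(t)\geq g(t^*)$ for $t$ near $t^*$. If $g$ attained a positive maximum at some $t^*\in(0,1)$, then the set $\{g=\max g\}$ would be both open (by the above) and closed in $[0,1]$, hence all of $[0,1]$, contradicting $g(0)=g(1)=0$. Therefore $g\leq 0$, which is the convexity inequality; as a consequence $u_y\geq 0$ globally on $\Omega$ for every $y\in\Omega$.

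To conclude with strict convexity I argue by contradiction: if $u$ were affine on some non-degenerate segment $[a,b]\subset\Omega$, fix $s\in(0,1)$ and set $y_s\equiv(1-s)a+sb$. Using the affine identity $u(y_t)=(1-t)u(a)+tu(b)$ together with $\langle\D u(y_s),b-a\rangle=u(b)-u(a)$, one checks that $u_{y_s}(y_t)=0$ for every $t\in[0,1]$. Since $u_{y_s}\geq 0$ on $\Omega$, each $y_t$ is a global minimum of $u_{y_s}$, forcing $\D u(y_t)=\D u(y_s)$ for all $t$---a violation of local injectivity of $\D u$. The main subtlety lies in this final step: the critical-groups machinery readily gives only local minimality of each $u_y$ at $y$, and recovering constancy of $\D u$ on any putative affine segment is precisely what makes essential use of the hypothesis that $\D u$ is locally injective.
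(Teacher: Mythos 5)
Your first half coincides with the paper's argument: local injectivity of $\D u$ makes every $y$ an isolated critical point of $u_y$, Proposition \ref{prop:ex}\ref{it:min} gives $C_k(u_{x_0},x_0)=\delta_{k,0}\Z$, Theorem \ref{thm:constcritgroup} propagates this to every $y\in\Omega$, and Proposition \ref{prop:ex}\ref{it:nomin} then forces each $y$ to be an (isolated) local minimum of $u_y$. Where you diverge is the endgame. The paper runs a single contradiction on a segment: assuming strict convexity fails, it takes the interior maximum $t^*$ of $\varphi(t)=u(y_t)-[tu(y_1)+(1-t)u(y_0)]$, uses $\varphi'(t^*)=0$ to identify $\varphi(t)-\varphi(t^*)$ with $u_{y_{t^*}}(y_t)$, and concludes that $u_{y_{t^*}}$ vanishes on a subsegment through $y_{t^*}$, contradicting the \emph{isolatedness} of the local minimum. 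You instead first upgrade the pointwise local support to global convexity by a clopen/connectedness argument along segments, and then exclude affine segments by showing $\D u$ would be constant on them, contradicting local injectivity directly. Both routes are valid and of comparable length; the paper's avoids the separate convexity step, while yours isolates the reusable fact ``local support everywhere $\Rightarrow$ convexity'' and makes the role of global supporting hyperplanes explicit.

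One statement needs repair: as written, the claim ``the local supporting inequality at $y_{t^*}$ yields $g(t)\geq g(t^*)$ for $t$ near $t^*$'' is false for general $t^*$ (e.g.\ for a strictly convex $u$ no interior $t^*$ is a local minimum of $g$). What the supporting inequality actually gives is $g(t)\geq g(t^*)+g'(t^*)(t-t^*)$ near $t^*$, since the slope of the restricted supporting line is $\langle \D u(y_{t^*}),b-a\rangle-(u(b)-u(a))=g'(t^*)$. This suffices for your purpose: at any interior point of $\{g=\max g\}$ the differentiable function $g$ has $g'=0$, so there the supporting line is horizontal, $g\geq \max g$ locally, and the max set is open; the clopen argument then goes through. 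Also, your reduction of ``not strictly convex'' to ``affine on a non-degenerate segment'' silently uses convexity (a convex function touching its chord at an interior point coincides with it on the whole segment); this is standard but worth a line. With these two small fixes the proposal is complete and correct.
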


\begin{proof}
Since $u_{x_0}(x_0)=0$, \eqref{eq:suphyp} asserts that $x_0$ is a local minimum of $u_{x_0}$, and in fact it must be isolated: if not, there would be $x_j\to x_0$ with $x_j$ local minima of $u_{x_0}$, thus we would have $0 = \D u_{x_0}(x_j) = \D u(x_j) - \D u(x_0)$, contradicting the local injectivity of $\D u$ at $x_0$. Hence, by Proposition \ref{prop:ex}\ref{it:min} we see that $C_{k}(u_{x_0},x_0) = \delta_{k,0}\Z$ and so by Theorem \ref{thm:constcritgroup} we must in fact have $C_k(u_y,y)= \delta_{k,0} \Z$ for all $y\in \Omega$. Since $y$ is always a critical point of $u_y$, by Proposition \ref{prop:ex}\ref{it:nomin} we deduce that every $y\in \Omega$ is a local minimum of $u_y$, which as before must be isolated. This implies that $u$ is strictly convex in $\Omega$: if not, there would be distinct $y_0,y_1\in \Omega$ and $\lambda \in (0,1)$ such that
$$u(y_\lambda ) \geq \lambda u(y_1) + (1-\lambda) u(y_0), \qquad y_\lambda \equiv \lambda y_1 + (1-\lambda) y_0.$$
But then the function $\varphi \colon t\mapsto u(y_t ) - [t u(y_1) + (1-t) u(y_0)]$ attains a maximum at some $t^*\in\left]0,1\right[$ and thus  $\varphi'(t^*)=0$, or equivalently $\langle \D u(y_{t^*}), y_1-y_0\rangle = u(y_1)-u(y_0)$. Hence, for $t\in [0,1]$, 
$$0 \geq\varphi(t) -  \varphi(t^*) = u(y_t) - u(y_{t^*}) - \langle \D u(y_{t^*}), y_t-y_{t^*}\rangle = u_{y_{t^*}}(y_t),$$
while for $t$ close to $t^*$ we also have, by \eqref{eq:suphyp},
\[
u_{y_{t^*}}(y_t)\geq 0.
\]
Combining these two inequalities we infer that $y_{t^*}$ is not an isolated local minimum of $u_{y_{t^*}}$, which leads to a contradiction.
\end{proof}


%





We now prove the main results of this paper. In fact we will prove the following more general version of Theorem \ref{thm:main}:

\begin{theorem}[Constancy of the index]\label{thm:mainFD}
Let $f\in W^{1,n}_\loc(\Omega,\R^n)$ be a map of finite distortion with $K_f\in L^p_\loc(\Omega)$, for $p$ an exponent satisfying \eqref{eq:condp}. If $\D f\in \Sym(n)$ a.e.\ in $\Omega$ then $\ind(\D f)$ is constant a.e.\ in $\Omega$.
\end{theorem}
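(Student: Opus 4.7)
The plan is to reduce Theorem~\ref{thm:mainFD} to the scalar Morse-theoretic setup of Section~\ref{sec:morse} by introducing a local potential for $f$, and then to identify the critical groups with the index using Theorem~\ref{thm:constcritgroup} together with Proposition~\ref{prop:ex}\ref{it:morse}.

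First, I would apply Theorem~\ref{thm:FD} (assuming $f$ is non-constant, otherwise the statement is trivial) to obtain that $f$ is continuous, a.e.\ differentiable, $\det\D f>0$ a.e., and that $\Omega'\equiv\Omega\setminus B_f$ is open, connected, and of full measure in $\Omega$. It therefore suffices to show that $\ind(\D f)$ is a.e.\ constant on $\Omega'$. On any ball $B\subset\Omega'$ the symmetry $\D f\in\Sym(n)$ a.e.\ yields $\partial_i f_j=\partial_j f_i$ distributionally, so $f=\D u$ for some $u\in C^1(B)$ (the $C^1$ regularity being inherited from continuity of $f$). By the very definition of $B_f$, the map $\D u=f$ is locally injective at every point of $B$, and Theorem~\ref{thm:constcritgroup} gives that $C_k(u_{x_0},x_0)$ is independent of $x_0\in B$.

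Finally, at a.e.\ $x_0\in B$ the map $f$ is differentiable with $\det\D f(x_0)>0$, so $u$ is twice differentiable at $x_0$ with non-singular Hessian $\D^2 u(x_0)=\D f(x_0)$, while local injectivity of $\D u$ forces $x_0$ to be an isolated critical point of $u_{x_0}$. Proposition~\ref{prop:ex}\ref{it:morse} then yields
\[
C_k(u_{x_0},x_0)=\delta_{k,\ind(\D f(x_0))}\,\Z,
\]
so $\ind(\D f)$ is a.e.\ constant on each ball $B\subset\Omega'$. Covering $\Omega'$ by such balls and invoking its connectedness, this local constancy extends to all of $\Omega'$, and hence to $\Omega$. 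The main technical nuisance, rather than a serious obstacle, is that the potential $u$ is only defined on balls rather than globally on $\Omega'$; however, two local potentials for $f$ on an overlap differ by a locally constant function that drops out of $u_{x_0}$, so $C_k(u_{x_0},x_0)$ is intrinsically defined by $f$, and the patching across overlapping balls is automatic.
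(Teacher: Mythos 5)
Your proof is correct and follows essentially the same route as the paper: reduce to the branch-free set $\Omega'=\Omega\setminus B_f$ via Theorem~\ref{thm:FD}, introduce a $C^1$ potential, apply Theorem~\ref{thm:constcritgroup} for constancy of the critical groups, and then read off the index at points of twice differentiability via Proposition~\ref{prop:ex}\ref{it:morse}. The only cosmetic difference is that the paper first localizes $\Omega$ itself to a ball and takes a single potential there before restricting to $\Omega'$, whereas you take local potentials on balls inside $\Omega'$ and patch using connectedness together with the (correct) observation that $u_{x_0}$ is independent of the additive constant in the potential; both variants are valid.
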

	
\begin{proof}
As the result is local in nature, we can assume $\Omega$ to be a ball. Due to this, since $\D f\in \Sym(n)$ a.e.\ in $\Omega$, there is a potential $u\in W^{2,n}(\Omega)$ such that $\D u =f$. We also have that $u \in C^1(\Omega)$, since $f$ has finite distortion, compare Theorem \ref{thm:FD}. According to the same theorem, $\Omega'\equiv \Omega\setminus B_f$ is a connected open set of full measure, and $f = \D u$ is locally injective on it.\ By Theorem \ref{thm:constcritgroup} the groups $C_{k}(u_{x},x)$ are independent of $x\in \Omega'$. Let $E\equiv \{x\in \Omega':\D^2u (x) \tp{ exists  and } \det(\D^2 u(x))\neq 0\}.$ Again by Theorem \ref{thm:FD} we see that $E$ has full measure in $\Omega'$, and by Proposition \ref{prop:ex}\ref{it:morse} we infer that the index of $\D^2 u(x)$ is constant in $E$.
\end{proof}	

%
	
Similarly, we obtain the following strengthening of Theorem \ref{thm:MA}:
	
\begin{corollary}\label{cor:MAFD}
If $u\in W^{2,np}_\loc(\Omega)$ solves \eqref{eq:MA}, with  $p$ as in \eqref{eq:condp}, then $\ind(\D^2 u)$ is a.e.\ constant.
\end{corollary}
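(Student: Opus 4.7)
The plan is to deduce the corollary as a direct application of Theorem \ref{thm:mainFD} to $f \equiv \D u$. The entire argument will reduce to verifying that this $f$ fulfills the hypotheses of that theorem, and the whole point of the exponent $np$ in the assumption $u \in W^{2,np}_\loc(\Omega)$ is that it matches exactly the $L^p$-integrability requirement on the distortion.

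First I would check the basic Sobolev regularity. Since $p\geq 1$ in \eqref{eq:condp}, we have $np\geq n$ and therefore $f = \D u \in W^{1,np}_\loc(\Omega,\R^n) \subseteq W^{1,n}_\loc(\Omega,\R^n)$. The symmetry of $\D f = \D^2 u$ a.e.\ follows from the equality of mixed weak partial derivatives for Sobolev functions, so $\D f \in \Sym(n)$ a.e.\ in $\Omega$.

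The core step is to bound the distortion. Using \eqref{eq:MA}, for a.e.\ $x\in\Omega$ we have $\det \D f(x) = \det \D^2 u(x) \geq \delta>0$, so $f$ is a map of finite distortion with
\[
K_f(x) = \frac{|\D^2 u(x)|^n}{\det \D^2 u(x)} \leq \delta^{-1}\,|\D^2 u(x)|^n.
\]
Raising to the $p$-th power and integrating over any compact $K\subset\Omega$,
\[
\int_K K_f^p \d x \leq \delta^{-p}\int_K |\D^2 u|^{np}\d x < +\infty,
\]
since $u\in W^{2,np}_\loc(\Omega)$. Thus $K_f \in L^p_\loc(\Omega)$ with $p$ as in \eqref{eq:condp}.

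Finally, as $\det \D f>0$ on a set of positive measure, $f$ is not constant, and Theorem \ref{thm:mainFD} applies to yield that $\ind(\D f) = \ind(\D^2 u)$ is a.e.\ constant in $\Omega$. I do not anticipate any genuine obstacle: the argument is a one-line reduction, and the only point requiring mild care is to track the arithmetic of the exponents so that the chosen Sobolev threshold on $u$ translates into exactly the integrability threshold \eqref{eq:condp} on $K_f$ demanded by Theorem \ref{thm:mainFD}.
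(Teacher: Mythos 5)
Your proposal is correct and follows exactly the route the paper intends: Corollary \ref{cor:MAFD} is deduced from Theorem \ref{thm:mainFD} by taking $f=\D u$, noting $\D f\in\Sym(n)$ a.e., and using $\det\D^2 u\geq\delta$ together with $u\in W^{2,np}_\loc(\Omega)$ to get $K_f\leq\delta^{-1}|\D^2 u|^n\in L^p_\loc(\Omega)$. The exponent bookkeeping and the verification of the finite-distortion hypothesis are exactly as required, so nothing is missing.
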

Although \eqref{eq:condp} is optimal\footnote{Up to the endpoint $p=n-1$.} for the validity of Theorem \ref{thm:FD} \cite{Guo2021}, we do not know if it is also optimal for the validity of Corollary \ref{cor:MAFD}, except when $n=2$. In fact, in this case one can use Faraco's staircase laminates \cite{Faraco2003b,Faraco2018} to construct maps $u\in W^{2,q}_\loc(\mb B^2)$, for some $q<2$, which solve \eqref{eq:MA} but for which the index is not constant. In general dimension, if one makes the additional assumption that $u$ is affine in a neighborhood of $\partial \Omega$ then, by the results of \v Sver\'ak in \cite{Sverak1992},  Corollary \ref{cor:MAFD} holds for $u\in W^{2,n}_\loc(\Omega)$. This suggests that perhaps the following has an affirmative answer:

\begin{question}
Let $u\in W^{2,n}_\loc(\Omega,\R^n)$ solve \eqref{eq:MA}. Is $f=\D u$ an open and discrete map?
\end{question}
If the answer is positive then Corollary \ref{cor:MAFD} would hold for $u\in W^{2,n}_\loc(\Omega)$, as in the case of affine boundary conditions. 
	\let\oldthebibliography\thebibliography
	\let\endoldthebibliography\endthebibliography
	\renewenvironment{thebibliography}[1]{
	\begin{oldthebibliography}{#1}
	\setlength{\itemsep}{0.5pt}
	\setlength{\parskip}{0.5pt}
	}
	{
	\end{oldthebibliography}
	}
	
	{\small
	\bibliographystyle{abbrv-andre}
	\bibliography{library.bib}
	}

\end{document}